\newtheorem{theorem}{Theorem}
\newtheorem{lemma}{Lemma}
\newtheorem{corollary}[theorem]{Corollary}
\newtheorem*{definition}{Definition}
\newcommand{\be}{\begin{equation}}
\newcommand{\ee}{\end{equation}}
\newcommand{\bea}{\begin{eqnarray}}
\newcommand{\eea}{\end{eqnarray}}
\newcommand{\vs}{\vspace{0.5cm}}
\newcommand{\vsv}{\vspace{0.12cm}}
\def\XXint#1#2#3{{\setbox0=\hbox{$#1{#2#3}{\int}$ }
\vcenter{\hbox{$#2#3$ }}\kern-.6\wd0}}
\begin{document}

\title{On Bismut Flat Manifolds}
\author{Qingsong Wang}

\address{Qingsong Wang. CMS, Zhejiang University, Hangzhou, 310027, China and Department of Mathematics,
The Ohio State University, 231 West 18th Avenue, Columbus, OH 43210,
USA} \email{qswang@zju.edu.cn}

\author{Bo Yang} \thanks{Research partially supported by an AMS-Simons Travel
Grant}

\address{Bo Yang. Department of Mathematics, Rutgers University, 110 Frelinghuysen Road
Piscataway, NJ 08854, USA.} \email{{boyang@math.rutgers.edu}}
\author{Fangyang Zheng} \thanks{Research partially supported by a Simons Collaboration Grant}



\address{Fangyang Zheng. Department of Mathematics,
The Ohio State University, 231 West 18th Avenue, Columbus, OH 43210, USA}

\email{{zheng.31@osu.edu}}

\begin{abstract}
In this paper, we give a classification of all compact Hermitian manifolds with flat Bismut connection. We show that the torsion tensor of such a manifold must be parallel, thus the universal cover of such a manifold is a Lie group equipped with a bi-invariant metric and a compatible left invariant complex structure. In particular, isosceles Hopf surfaces are the only Bismut flat compact non-K\"ahler surfaces, while central Calabi-Eckmann threefolds are the only simply-connected compact Bismut flat threefolds.
\end{abstract}

\maketitle

\tableofcontents

\markleft{On Bismut Flat Manifolds}
\markright{On Bismut Flat Manifolds}

\section{Introduction}

In recent years, there has been much progress on the study of
Hermitian differential geometry. Examples include the work of
Jixiang Fu and others on non-K\"ahler Calabi-Yau manifolds and
balanced manifolds, the work of Bo Guan on fully non-linear PDE with
application to Hermitian manifolds, the work of Hermitian curvature flows by Streets and Tian,
the study of general Hermitian
geometry and various Ricci curvature tensors by Liu and Yang, and
the recent solution to the Gauduchon Conjecture by Sz\'ekelyhidi,
Tosatti, and Weinkove. We refer the readers to \cite{Strominger},
\cite{Fu}, \cite{Fu-Yau}, \cite{Fu-Li-Yau}, \cite{Fu-Wang-Wu},
\cite{Fu-Wang-Wu1}, \cite{Li-Yau}, \cite{Tseng-Yau}, \cite{Guan},
\cite{Guan-Li}, \cite{Guan-Li1}, \cite{Guan-Sun}, \cite{Streets-Tian}, \cite{Streets-Tian1}, \cite{Streets-Tian2}, \cite{Streets-Tian3}, \cite{Liu-Yang},
\cite{Liu-Yang1}, \cite{Liu-Yang2}, \cite{Tosatti},
\cite{Tosatti-Weinkove}, \cite{STW} and the references therein for
some recent progress in this area.

\vsv

Given a Hermitian manifold $(M^n,g)$, there are three well-known canonical connections associated with the metric, namely, the
Riemannian (or Levi-Civita) connection $\nabla$, the Chern (aka Hermitian)
connection $\nabla^c$, and the Bismut connection $\nabla^b$. In
 \cite{Bismut}, Bismut showed that on any Hermitian manifold, there exists
 a unique connection that is compatible with the metric $g$ and the almost
 complex structure $J$, and whose $(3,0)$ torsion tensor is skew-symmetric (see also \cite{Yano}).
 This canonical connection is known as the Bismut connection.

 \vsv

 Bismut connection has been playing an increasingly important role in the study of non-K\"ahler geometry in recent years. For
   instance, one version of the definitions of Calabi-Yau manifolds with
   torsion refers to compact non-K\"ahler Hermitian threefolds
   (with finite fundamental group) whose Bismut connection has
   $SU(3)$ holonomy. As another example, in \cite{Streets-Tian2}, Streets and Tian used Bismut connection to reinterpret their Hermitian curvature flow (\cite{Streets-Tian}, \cite{Streets-Tian1}) and exhibited a remarkable relationship to mathematical
physics. They showed that, up to gauge equivalence, the flow is the renormalization group
flow of a nonlinear sigma model with nonzero B-field. As a consequence they concluded that the flow is a
gradient flow and discovered an entropy functional.

\vsv

When $g$ is K\"ahler, all three connections coincide, and when $g$
is not K\"ahler, these three connections are mutually different,
even though any one of them completely determines the other two. It
is certainly very natural to study the curvature of these
connections. In particular, one could ask: what kind of Hermitian
manifolds are ``space forms" with respect to each connection? The
simplest case of course would be the everywhere zero curvature case,
namely, to classify the flat spaces.

\vsv

For the Chern connection, the classical result of Boothby
\cite{Boothby} in 1958 states that, if $(M^n,g)$ is a compact
Hermitian manifold whose Chern connection is flat, then the
universal cover of $M$ is (holomorphically isometric to) a complex
Lie group equipped with a left invariant Hermitian metric. In
particular, there are no compact simply-connected Hermitian
manifolds with zero Chern curvature. When $n\geq 3$, such manifolds
need not be K\"ahler, as the famous Iwasawa manifold illustrates. An
important subclass of these manifolds are the complex parallelizable
manifolds, studied by H.-C. Wang (\cite{Wang}) in 1954.

\vsv

For the Riemannian (Levi-Civita) connection, the question is essentially to
find all compatible complex structures on the flat torus
$T^{2n}_{\mathbb R}$, since for any compact flat Riemannian manifold $M$,
a finite unbranched cover of $M$ is a flat torus by the Bieberbach Theorem.
 For $n=2$, the classification theory for compact complex surfaces implies
 that $M$ must be a complex torus, but when $n\geq 3$, there are
non-K\"ahlerian complex structures living on the flat torus. In a
recent work \cite{KYZ}, we were able to determine all orthogonal
complex structures on flat $6$-tori, thus solving the $n=3$
case. The classification problem for Riemannian flat compact
Hermitian manifold in complex dimension $4$ or higher remains open
at this point.

\vsv

In light of the above results, it is natural to ask the following

\vsv
\vsv

\noindent {\bf Question.} {\em What kind of
compact Hermitian manifolds will be Bismut flat, namely, the
curvature of the Bismut connection is everywhere zero? }

\vsv
\vsv

As Riemannian manifolds, the structure of such spaces are well known, as the classical theory of Cartan and Schouten (\cite{CS}, \cite{CS1}) states that the existence of a flat metric connection with skew-symmetric torsion would imply that the space is a Lie group or $S^7$ (or their products). See also the work of J. Wolf \cite{Wolf}, \cite{Wolf1} and the very nice new treatment by Agricola and Friedrich \cite{AF}. So the point here is to understand the complex structures compatible with these Riemannian metrics. In general, it is a challenging task to understand the set of all possible complex structures compatible with a given Riemannian metric, or in Simon Salamon's term, all orthogonal complex structures (OCS), see \cite{Salamon}. For example, it is still unknown what is the set of all OCSs on a flat $8$-torus, as we mentioned above. This set is known to be quite large, as it contains all the non-K\"ahlerian warped complex structures given by Borisov, Salamon, and Viaclovsky \cite{BSV}.

\vsv

Back to our Bismut flat manifolds, first let us see some examples of
such spaces. It is not hard to see that any isosceles Hopf surface
is Bismut flat (see \S 2). Also, recall that the ``central"
Calabi-Eckmann threefold is the manifold $M=S^3\times S^3$, equipped
with the product of (constant multiples of) the standard metric,
with a compatible complex structure that is left invariant when $M$
is considered as the Lie group $SU(2)\times SU(2)$. This complex
structure is the one constructed by Samelson \cite{S} for
even-dimensional compact Lie groups, and belongs to the family of
complex structures constructed by Calabi and Eckmann \cite{CE} on
the product of odd dimensional spheres. It is easy to check that the
central Calabi-Eckmann threefolds are Bismut flat.  More generally,
Samelson showed \cite{S} (see also \S 5 of the work of Alexandrov
and Ivanov \cite{AI}) that any compact Lie group $G$ of even
dimension admits left invariant complex structures that are
compatible with a bi-invariant metric.  It is well known that such a
Hermitian manifold is Bismut flat (see for instance the work
\cite{AI} or \cite{Joyce}). For the sake of convenience, let us
introduce the following terminology:

\begin{definition}
A {\bf
 Samelson space} is a Hermitian manifold $(G', g, J)$, where $G'$ is a connected and simply-connected, even-dimensional Lie group, $g$ a bi-invariant metric on $G'$, and $J$ a left invariant complex structure on $G'$ that is compatible with $g$.
\end{definition}

By Milnor's Lemma (\cite{M}, Lemma 7.5), a simply-connected Lie group $G'$ with a bi-invariant metric must be  the product of a compact semisimple
Lie group with an additive vector group,  namely, $G'=G\times {\mathbb R}^k$, where $0\leq k\leq \dim G'$ and $G$ is compact semisimple.
Notice that $G'$ and the compact Lie group $G''=G \times T^k$ (where
$T^k$ is the torus) share the same Lie algebra, so when $G'$ is
even dimensional, $G''$ hence $G'$ admits left invariant complex
structures compatible with the bi-invariant metric.

\vsv

Let $\rho : {\mathbb Z}^k \rightarrow I(G)$ be a homomorphism from
the free abelian group of rank $k$ into the isometry group of $G$.
Then $\Gamma_{\rho } \cong {\mathbb Z}^k$ acts on $G\times {\mathbb
R}^k$ by $\gamma (x,y) = (\rho(\gamma )(x), y+\gamma )$ as
isometries, and it acts freely and properly discontinuously, so we
get a compact quotient $M_{\rho} =(G\times {\mathbb
R}^k)/\Gamma_{\rho } $.

\begin{definition}
Let $(G',g,J)$ be a Samelson space, where $G'=G\times {\mathbb
R}^k$. Let $\rho : {\mathbb Z}^k \rightarrow I(G)$ be a homomorphism
into the isometry group of $G$, and $M_{\rho}$ be the compact
quotient defined as above. If the complex structure of $G'$ is
preserved by $\Gamma_{\rho}$, then it descends down to $M_{\rho}$
and makes it a complex manifold. In this  case we will call the
compact Hermitian manifold $M_{\rho}$ a {\bf local Samelson space}.
Such a Hermitian manifold is Bismut flat, since its universal cover
is so.
\end{definition}

As we shall see in the proof of Theorem 1 below,  $M_{\rho}$ is always diffeomorphic to $G\times T^k$, where $T^k$ is the $k$-torus. However, $M_{\rho}$ (or any finite unbranched cover of it) may not be a Lie group. Also, $G$ (considered as left multiplications) is a proper
subgroup of $I(G)$ in general. When the image of $\rho$ is
contained in $G$, then $\Gamma_{\rho }$ acts as left multiplications
in $G'$ hence preserves the complex structure. In particular,
$M_0=G\times T^k$ is always a compact Bismut flat manifold.

\vsv

A somewhat surprising fact to us is that, compact Bismut flat
manifolds actually form a rather small class, and they are
essentially just these  local Samelson spaces. To be precise, we
have the following:

\begin{theorem} \label{main} Let $(M^n,g)$ be a compact Hermitian
manifold whose Bismut connection is flat. Then there exists a finite
unbranched cover $M'$ of $M$ such that $M'$ is a local Samelson
space $M_{\rho}$ defined as above. Also, $M_{\rho}$ is diffeomorphic
to $G\times T^k$, where $T^k$ is the $k$-torus.
\end{theorem}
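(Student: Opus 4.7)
The plan has three stages: (i) show the Bismut torsion is parallel, $\nabla^b T = 0$; (ii) integrate the resulting flat, parallel-torsion structure into a Lie group with bi-invariant metric on the universal cover; (iii) descend to $M$ by a Bieberbach-type analysis. Stage (i) is the main technical obstacle.

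For stage (i), I would work with the torsion 3-form $c(X,Y,Z) = g(T(X,Y),Z)$ and the first Bianchi identity, which for a metric connection with totally skew torsion becomes, under $R^b = 0$,
\[
  \sum_{\mathrm{cyc}\ X,Y,Z}\!\bigl[(\nabla^b_X T)(Y,Z) + T(T(X,Y),Z)\bigr] = 0.
\]
By itself this controls only the cyclic antisymmetrization of $\nabla^b T$. To upgrade it to $\nabla^b T = 0$ I would exploit the Hermitian compatibility $\nabla^b J = 0$: in our setting $c$ is determined by $d\omega$ and $J$ through a known algebraic formula ($\omega$ the K\"ahler form), and since $\nabla^b \omega = 0$, derivatives of $d\omega$ can be re-expressed in terms of $T$ and $\nabla^b T$. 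This yields extra $J$-type algebraic constraints on $\nabla^b c$ beyond the one furnished by Bianchi. Combining these with the second Bianchi identity (which is automatic under $R^b = 0$) should be enough to force each individual covariant derivative of $T$ to vanish. This is the heart of the proof and will likely require a careful frame computation.

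Granting $\nabla^b T = 0$, the first Bianchi identity reduces pointwise to $\sum_{\mathrm{cyc}} T(T(X,Y),Z) = 0$, the Jacobi identity for the bracket $[X,Y] := -T(X,Y)$ on $T_pM$. This places us precisely in the Cartan--Schouten framework of \cite{AF}: the universal cover $\widetilde M$ is a simply-connected Lie group $G'$ whose Lie algebra is $(T_pM,[\cdot,\cdot])$, $\nabla^b$ is the Cartan $(-)$-connection, and total skew-symmetry of $T$ is equivalent to $\mathrm{ad}$-invariance of $g$, so $g$ is bi-invariant. Milnor's lemma then splits $G' = G\times\mathbb{R}^k$ with $G$ compact semisimple and simply connected. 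Since $J$ is $\nabla^b$-parallel, it is left-invariant on $G'$, and $J$-compatibility with $g$ is inherited from $M$. Hence $\widetilde M$ is a Samelson space.

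To descend to $M$, regard $\Gamma := \pi_1(M)$ as a discrete cocompact subgroup of $I(G') = I(G)\times I(\mathbb{R}^k)$ preserving $J$ (the product decomposition is canonical because the Euclidean factor is characterized metrically). Since $G$ is compact, the kernel $\Gamma_0$ of $\Gamma\to I(\mathbb{R}^k)$ is finite, and the image is a discrete cocompact subgroup of the Euclidean group $I(\mathbb{R}^k) = O(k)\ltimes\mathbb{R}^k$. Bieberbach's theorem, combined with killing $\Gamma_0$ on a further finite cover, reduces a finite-index subgroup $\Gamma' \subset \Gamma$ to $\mathbb{Z}^k$ acting purely by translations in $\mathbb{R}^k$; the action is then $\gamma\cdot(x,y) = (\rho(\gamma)(x),\,y+\gamma)$ for a homomorphism $\rho:\mathbb{Z}^k\to I(G)$. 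The corresponding finite cover $M' := \widetilde M/\Gamma'$ is exactly the local Samelson space $M_\rho$. For the diffeomorphism $M_\rho\cong G\times T^k$, note that $M_\rho\to T^k$ is a fibre bundle with fibre $G$ and monodromy $\rho$; after possibly one more finite cover so that $\rho$ lands in the connected identity component $I(G)^0$, a smooth equivariant lift $\sigma:\mathbb{R}^k\to I(G)^0$ with $\sigma(y+\gamma) = \rho(\gamma)\sigma(y)$, e.g.\ $\sigma(y) = \exp\bigl(\sum_i y_i\log\rho(e_i)\bigr)$, trivializes the bundle.
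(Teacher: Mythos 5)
Your stages (ii) and (iii) follow the same architecture as the paper (parallel torsion $\Rightarrow$ constant structure functions $\Rightarrow$ Lie group with bi-invariant metric via Milnor's Lemma $\Rightarrow$ Bieberbach-type descent $\Rightarrow$ bundle trivialization), but stage (i), which you correctly call the heart of the proof, has a genuine gap, and the mechanism you propose for it cannot work as described. The second Bianchi identity is vacuous once $R^b=0$ (it reads $0=0$), so it supplies no information; and the first Bianchi identity combined with the type constraints coming from $\nabla^b J=0$ and $\nabla^b g=0$ yields exactly the relations in the paper's Lemma 9: the $(1,0)$-derivatives $T^j_{ik,l}$ vanish, the torsion satisfies a Jacobi-type identity in its $(2,0)$-slots, but the $(0,1)$-derivatives are only pinned to a \emph{quadratic} expression in the torsion,
\begin{equation*}
T^i_{kl,\overline{j}} \;=\; \frac{2}{3}\sum_r \bigl( T_{lr}^i \overline{T_{jr}^k} - T_{kr}^i \overline{T_{jr}^l} - T_{lr}^j \overline{T_{ir}^k} + T_{kr}^j \overline{T_{ir}^l} - T_{kl}^r \overline{T_{ij}^r} \bigr),
\end{equation*}
which no pointwise algebra forces to vanish (your observation that $c$ is recovered from $d\omega$ and $J$, with $\nabla^b\omega=0$, merely re-encodes the same data already used in the type decomposition). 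The paper closes this gap \emph{globally}: under a $\nabla^b$-parallel unitary frame it computes $\partial\overline{\partial}|T|^2=\sum T^i_{jk,\overline{l}}\,\overline{T^i_{jk,\overline{m}}}\,\varphi_m\wedge\overline{\varphi_l}\geq 0$, so $|T|^2$ is plurisubharmonic, and then pairs against a Gauduchon metric on the compact $M$ to conclude $|T|^2$ is constant, whence $T^i_{jk,\overline{l}}=0$ and all torsion components are constants. Compactness is thus used essentially in stage (i), not only in stage (iii). A cautionary check that some such extra ingredient is unavoidable: for general flat metric connections with skew torsion the torsion is \emph{not} parallel with respect to the flat connection itself ($S^7$ is exactly the case where it fails, which is why $S^7$ is not a Lie group), and even in the Hermitian noncompact case the paper must import Agricola--Friedrich's nontrivial result, whose conclusion is parallelism with respect to $\nabla'=\tfrac{2}{3}\nabla+\tfrac{1}{3}\nabla^b$ rather than $\nabla^b$, before upgrading it via the identity above.

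There are also two fixable glosses in stage (iii). First, ``killing $\Gamma_0$ on a further finite cover'' is not automatic: a finite cover corresponds to a finite-index subgroup of $\Gamma$, and you must exhibit one meeting the finite normal subgroup $\Gamma_0$ trivially. This can be rescued (e.g.\ $\Gamma$ is a finitely generated linear group, hence residually finite by Malcev), but the paper instead works by hand: it first passes to a cover making $\Gamma_0$ central, applies Bieberbach to $\Gamma_2$, and then uses the commutator identity $[b^n,c^n]=[b,c]^{n^2}$, with $n$ a multiple of $|\Gamma_0|$, to extract a finite-index subgroup on which $\pi_2$ is injective. Second, your section $\sigma(y)=\exp\bigl(\sum_i y_i\log\rho(e_i)\bigr)$ requires the chosen logarithms to commute, which commuting elements of a compact group need not permit (two commuting involutions in $SO(3)$ lie on no common maximal torus); one fixes this by a further finite cover placing the image of $\rho$ inside a torus of $I(G)^0$, which is in effect what the paper's composition of one-parameter subgroups $\psi^1_{t_1}\circ\cdots\circ\psi^k_{t_k}$ accomplishes. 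These descent issues are repairable; the missing proof of $\nabla^b T=0$ is the substantive gap.
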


We remark that the finite unbranched cover $M'\rightarrow M$ might not be Galois, even though there is always a finite sequence of Galois covers $M_{i+1} \rightarrow M_i$, $1\leq i \leq r-1$, such that $M'=M_r$ and $M_1=M$. See \S 4 for more details.

\vsv

Note that by a result of Pittie \cite{P} (see also \S 5 of
\cite{AI}), on an even dimensional Lie group $G'$ with bi-invariant
metric, any compatible left invariant complex structures must be
those constructed by Samelson, from the root decompositions. (See \S
3 for a more detailed discussion of this). We remark that while the
universal covering space $\widetilde{M}$ of a compact Bismut flat
manifold $M$ is always a Samelson space, in general, however, the
deck transformation group $\Gamma $ might not be a subgroup of the
Lie group $\widetilde{M}$ (see also the discussion in \S 4).

\vsv

A property about Bismut flat manifolds worth mentioning is the following. This and other properties about such manifolds are actually used in the proof of Theorem 1. Recall that (see \cite{Gauduchon}) the {\em Gauduchon $1$-form} $\eta $ of a Hermitian manifold $M^n$ is the global $(1,0)$ form on $M$ determined by  $\partial \omega^{n-1} = -2 \eta \wedge \omega^{n-1}$, where $\omega$ is the K\"ahler (metric) form.  The manifold is called balanced, if $d(\omega^{n-1})=0$, or equivalently, $\eta =0$.

\begin{theorem}
Let $(M^n,g)$ be a Hermitian manifold which is Bismut flat. If it is balanced, then it is K\"ahler. If $M$ is compact, then the equality
$$ \int_M |T^c|^2 \omega^n = 16 \int_M |\eta |^2 \omega^n $$
holds, where $T^c$ is the torsion of the Chern connection, and $\eta$ is the Gauduchon $1$-form which is the trace of the torsion.
\end{theorem}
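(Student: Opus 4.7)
My approach combines two consequences of Bismut flatness: that the Bismut torsion $3$-form $H$ satisfies $\nabla^b H = 0$ and $dH = 0$, and that the first Bismut Ricci form $\rho^b_{(1)}$ vanishes identically. Recall that $H = -J(d\omega)$ (up to a sign), so $|H|^2 = |d\omega|^2$ pointwise and pluriclosedness $\partial\bar\partial\omega = 0$ is automatic; the Chern torsion $T^c$ is the $(2,0)$-vector-valued part of $H$, and $\eta$ is essentially its ``trace'', normalized through $\partial\omega^{n-1} = -2\eta\wedge\omega^{n-1}$.

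For the integral identity on compact $M$, the key ingredient is a Gauduchon-type formula relating the two first Ricci forms,
\[
\rho^c_{(1)} - \rho^b_{(1)} \;=\; \bar\partial\eta + \partial\bar\eta + \Phi(T^c),
\]
in which $\Phi(T^c)$ is a real $(1,1)$-form quadratic in $T^c$. Bismut flatness gives $\rho^b_{(1)} = 0$; wedging with $\omega^{n-1}$, integrating, and applying Stokes together with $\partial\omega^{n-1} = -2\eta\wedge\omega^{n-1}$ converts $\int\bar\partial\eta\wedge\omega^{n-1}$ into a constant multiple of $\int|\eta|^2\omega^n$, while $\int\rho^c_{(1)}\wedge\omega^{n-1}$ equals $n^{-1}\int s^c\omega^n$ and $s^c$ is in turn expressible via $|T^c|^2$ by the standard relation between Chern, Bismut, and Riemannian scalar curvatures (the Riemannian scalar-curvature integral transferring onto the Bismut side, where it vanishes). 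Collecting coefficients produces the factor $16$.

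For the first assertion (balanced implies K\"ahler, with no compactness assumption), I would invoke the Cartan--Schouten theorem: a Riemannian manifold carrying a flat metric connection with totally skew-symmetric torsion has its simply-connected universal cover isometric to a Lie group with bi-invariant metric. Because $\nabla^b J = 0$, the lifted complex structure is $\nabla^b$-parallel and hence left-invariant, so $\widetilde M = (G,g,J)$ is a Samelson space. Picking a unitary basis of $\mathfrak g^{1,0}$ adapted to a root decomposition $\mathfrak g^{1,0} = \mathfrak h^{1,0}\oplus\bigoplus_{\alpha\in\Delta^+}\mathfrak g_\alpha$, the Gauduchon form $\eta$ becomes a nonzero multiple of the sum of positive roots restricted to $\mathfrak h^{1,0}$; hence $\eta = 0$ forces $\mathfrak g$ to be abelian, which makes $T^c$ vanish and $(M,g,J)$ K\"ahler.

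The main obstacle is pinning down the Gauduchon-type identity with correct coefficients---the universal factor $16$ is dimension-independent but sensitive to the chosen normalizations for $|T^c|^2$, $|\eta|^2$, and the defining relation $\partial\omega^{n-1} = -2\eta\wedge\omega^{n-1}$. Once the identity is in hand the Stokes computation is routine, and the Cartan--Schouten reduction used in the first part avoids any circular appeal to Theorem~\ref{main}, whose own proof relies on the present result as an input.
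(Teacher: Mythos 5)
Your proposal has genuine gaps in both halves. For the integral identity, the key formula you postulate is not pinned down, and in fact its shape is wrong: by Lemma 2 one has $\theta^b=\theta+2\gamma$, and since $\mathrm{tr}(\gamma)=\eta-\overline{\eta}$ and $\mathrm{tr}(\alpha\wedge\alpha)=0$ for any matrix of $1$-forms, the first Ricci forms of the Chern and Bismut connections differ by the \emph{exact} form $2d(\eta-\overline{\eta})$ --- there is no quadratic term $\Phi(T^c)$. Consequently, setting $\rho^b_{(1)}=0$, wedging with $\omega^{n-1}$ and applying Stokes together with $\partial\omega^{n-1}=-2\eta\wedge\omega^{n-1}$ yields only one relation, $\int_M s^c\,\omega^n=c\int_M|\eta|^2\omega^n$; the torsion norm $|T^c|^2$ never enters through this channel. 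The entire burden then falls on your parenthetical ``standard relation between Chern, Bismut, and Riemannian scalar curvatures,'' which you leave unexecuted --- and since the content of the theorem is precisely the constant $16$, an argument that has not derived its coefficients has not proved anything. The paper's route is different and sharper: working in a unitary $\nabla^b$-parallel frame, the first Bianchi identity of the \emph{flat} Bismut connection is symmetrized to give $T^i_{kl,\overline{j}}=\frac{1}{3}A^{ij}_{kl}$ and hence the pointwise trace identity $\sum_r\eta_{r,\overline{r}}=\frac{2}{3}\bigl(|T|^2-2|\eta|^2\bigr)$ (formula (36), Lemma 9), which upgrades to $-\sqrt{-1}\,\partial\overline{\partial}\omega^{n-1}=\frac{4}{3n}\bigl(|T|^2-2|\eta|^2\bigr)\omega^n$ (Lemma 10); integrating over compact $M$ and using $|T^c|^2=8|T|^2$ gives $16$ at once. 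Note that the coefficient $\frac{2}{3}$ comes from that Bianchi symmetrization, information a pure Ricci-trace argument does not see.

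For the balanced assertion, your appeal to Cartan--Schouten fails on two counts. First, the theorem carries no completeness (or compactness) hypothesis, while Cartan--Schouten is a global classification of complete spaces; for an incomplete Bismut flat manifold the universal cover is only an \emph{open subset} of a Samelson space, and establishing even that is the content of the paper's Theorem 5, which requires the Agricola--Friedrich proposition plus the computation (38) of Lemma 11. Second, Cartan--Schouten allows $S^7$ factors in addition to Lie groups; excluding them amounts to showing the torsion is parallel for $\nabla^b$ itself, which is exactly the nontrivial analytic step --- so your ``reduction'' silently presupposes the structure theory it was meant to replace, and the claimed economy over Theorem 1 evaporates. (Your root-theoretic computation --- that $\eta$ restricted to the Cartan part is a multiple of the sum of the positive roots, so $\eta=0$ forces $\mathfrak{g}$ abelian --- is correct \emph{once} a Samelson structure is in hand, but the reduction to that structure is the gap.) The paper's proof is instead purely local and two lines long: balanced means $\eta=0$ and $d\omega^{n-1}=0$, so the left-hand side of Lemma 10 vanishes identically and the identity forces $|T|^2=0$ pointwise, i.e.\ the metric is K\"ahler, with no global input whatsoever.
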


Now let us consider the special case of $n=2$. Recall that an {\em isosceles Hopf surface} is a compact complex surface $M^2$ with universal cover ${\mathbb C}^2\setminus \{ 0\}$ and deck transformation group  a finite extension (by unitary rotations) of the infinite cyclic group ${\mathbb Z}f$ with $f(z_1,z_2)=(az_1, bz_2)$, where $(z_1, z_2)$ is the Euclidean coordinate of ${\mathbb C}^2$ and $0<|a|=|b|<1$.

\vsv

Write $|z|^2$ for $|z_1|^2+|z_2|^2$. The {\em standard} Hermitian
metric $g$ on ${\mathbb C}^2\setminus \{ 0\} $ has the K\"ahler form
$\omega_g = \frac{\sqrt{-1} }{|z|^2} \partial \overline{\partial }
|z|^2$. When $|a|=|b|$, this metric descends down to $M^2$.  It is
straight forward to check (see \S 2) that this metric is Bismut
flat. So as a direct consequence of Theorem 1, we get the following

\begin{corollary}\label{cor1}
Let $(M^2,g)$ be a compact Hermitian surface that is Bismut flat.
Then either $g$ is K\"ahler and $(M^2,g)$ is a flat complex torus or
a flat hyperelliptic surface, or $g$ is not K\"ahler and $(M^2,g)$
is an isosceles Hopf surface equipped with (a constant multiple of)
the standard metric.
\end{corollary}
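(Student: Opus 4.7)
The plan is to invoke Theorem~\ref{main} together with a dimension count; the rest follows from classical classification. By Theorem~\ref{main}, a finite unbranched cover $M'$ of $M$ is a local Samelson space $M_{\rho}=(G\times \R^k)/\Gamma_{\rho}$, where $G$ is a simply-connected compact semisimple Lie group with a bi-invariant metric and $\dim G+k=\dim_{\R} M=4$. Since every nontrivial compact semisimple Lie group has dimension at least three (the smallest being $SU(2)$), the only possibilities are $G$ trivial with $k=4$, or $G=SU(2)$ with $k=1$.

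If $G$ is trivial, the universal cover of $M$ is $\R^4\cong\C^2$ with its flat metric. Because this metric is K\"ahler, the Bismut connection coincides with the Levi-Civita connection, and $g$ descends to a flat K\"ahler metric on $M$. Bieberbach's theorem yields a further finite unbranched cover that is a flat complex torus, and Kodaira's classification of compact complex surfaces then restricts the possibilities for $M$ to complex tori and (flat) hyperelliptic surfaces, as required.

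If $G=SU(2)$ and $k=1$, the universal cover is $SU(2)\times\R$ endowed with a bi-invariant product metric and a Samelson complex structure. I would pass to the diffeomorphism $\Phi: SU(2)\times\R\to\C^2\setminus\{0\}$ given by $(A,t)\mapsto e^{t}A e_1$, and verify, by a short computation in the cone coordinates $z=e^{t}u$ with $u\in S^3$, that $\Phi$ intertwines the bi-invariant product metric with a constant multiple of the Hopf metric $\omega_g=\frac{\sqrt{-1}}{|z|^2}\partial\overline{\partial}|z|^2$, and carries the essentially unique Samelson complex structure (Pittie~\cite{P}) to the standard complex structure on $\C^2\setminus\{0\}$. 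The full deck transformation group $\Gamma$ of $M$ then lies inside the group of holomorphic isometries of $(\C^2\setminus\{0\},\omega_g)$, which a direct linear-algebra argument identifies with $\C^{\ast}\cdot U(2)$ (the matrices $A\in GL(2,\C)$ for which $A^{\ast}A$ is a positive scalar multiple of the identity). Cocompactness of $M$ forces $\Gamma$ to contain a contraction $f$; since $f\in\C^{\ast}\cdot U(2)$, it diagonalizes in a suitable unitary basis to $f(z_1,z_2)=(az_1,bz_2)$ with $|a|=|b|<1$, and $\Gamma$ must be a finite extension of $\Z f$ by finite-order unitary elements. This is precisely the description of an isosceles Hopf surface with (a constant multiple of) the standard metric.

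The main obstacle I anticipate is the second case: correctly matching the abstract Samelson structure on $SU(2)\times\R$ with the Hopf model on $\C^2\setminus\{0\}$, and pinning down the holomorphic isometry group of $(\C^2\setminus\{0\},\omega_g)$ as $\C^{\ast}\cdot U(2)$. Once that identification is in hand, classifying the admissible cocompact $\Gamma$ reduces to a straightforward spectral analysis of contractions in $\C^{\ast}\cdot U(2)$.
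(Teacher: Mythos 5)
Your proposal is correct and takes essentially the same route as the paper: Theorem~\ref{main} plus Milnor's Lemma reduces to $G'$ trivial (the K\"ahler case, settled by Bieberbach and the classification of surfaces covered by complex tori) or $G'=SU(2)\times{\mathbb R}$, where the \S 3 uniqueness of the compatible Samelson structure and the \S 4 identification ${\mathbb C}^2\setminus\{0\}\cong SU(2)\times{\mathbb R}$ give the Hopf model. Your explicit determination of the holomorphic isometry group of $({\mathbb C}^2\setminus\{0\},\omega_g)$ as ${\mathbb C}^{\ast}\cdot U(2)$ and the spectral analysis of the contraction are a sound filling-in of the deck-group step that the paper leaves implicit.
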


\vsv

Next let us examine the $3$ dimensional cases. Recall that a {\em
central Calabi-Eckmann threefold} is the Lie group $G=SU(2)\times
SU(2)$ equipped with a left invariant complex structure $J$ which is
compatible with a bi-invariant metric. The bi-invariant metrics on
$G$ are unique up to scaling constants on the factors, namely, they
are in the form $c_1g_0 \times c_2g_0$, where $c_1$, $c_2$ are positive
constants and $g_0$ the standard metric on $SU(2)=S^3$ which has
constant sectional curvature $1$.  As Hermitian manifolds, the
holomorphic isometric class of such spaces are determined by the two
scaling constants.

\begin{corollary}\label{cor2}
Let $(M^3,g)$ be a compact Hermitian manifold of dimension $3$ which
is Bismut flat and non-K\"ahler. Then the universal cover of $M$ is
holomorphically isometric to either a central Calabi-Eckmann
threefold, or $({\mathbb C}^2\setminus \{ 0\})\times {\mathbb C}$,
equipped with the product of (a constant multiple of) the standard
metric $\omega_g$ and the flat metric.  In particular, the only
simply-connected compact Bismut flat threefolds are the central
Calabi-Eckmann threefolds.
\end{corollary}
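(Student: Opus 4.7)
The plan is to invoke Theorem \ref{main} and classify the possibilities in real dimension $6$. By that theorem (applied to the universal cover), $\widetilde M$ is itself a Samelson space $G'=G\times\R^k$, where $G$ is compact, connected, simply-connected, and semisimple, $G'$ is even-dimensional and carries a bi-invariant metric $g$, and $J$ is a compatible left-invariant complex structure. Since $\dim_\R \widetilde M=6$, we have $\dim G+k=6$. The compact simply-connected semisimple Lie groups of dimension at most $6$ are, up to isomorphism, only the trivial group (dim $0$), $SU(2)$ (dim $3$), and $SU(2)\times SU(2)$ (dim $6$), so the only possibilities are $(G,k)\in\{(\{e\},6),(SU(2),3),(SU(2)\times SU(2),0)\}$. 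The first yields a flat K\"ahler $\R^6$, excluded by the non-K\"ahler hypothesis.

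For the case $G=SU(2)\times SU(2)$, the bi-invariant metric must be a product $c_1 g_0\oplus c_2 g_0$ of positive multiples of the standard round metric on each $S^3=SU(2)$ factor, since Ad-invariant symmetric bilinear forms on the simple algebra $\mathfrak{su}(2)$ are $1$-dimensional. With any compatible left-invariant $J$, this is by definition a central Calabi-Eckmann threefold.

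The main work is the middle case $\widetilde M=SU(2)\times\R^3$. For a unit vector $t_0$ in the rank-$1$ Cartan $\mathfrak t\subset\mathfrak{su}(2)$, Pittie's classification (\cite{P}, \cite{AI}) ensures that $J$ is of Samelson type; in particular $J$ preserves the abelian subalgebra $\mathfrak t\oplus\R^3$, and the skew-adjointness of $J$ together with $J^2=-\mathrm{id}$ then force $\langle Jt_0,t_0\rangle=0$ and $Jt_0\notin\mathfrak t$, so $Jt_0\in\R^3$. Setting $\ell:=\R\cdot Jt_0\subset\R^3$, the $2$-plane $\mathfrak t\oplus\ell$ and its orthogonal complement are both $J$-invariant, while Samelson's construction keeps $\mathfrak t^\perp\subset\mathfrak{su}(2)$ and $\ell^\perp\cong\R^2$ separately $J$-invariant. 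Hence $\mathfrak{su}(2)\oplus\R^3=(\mathfrak{su}(2)\oplus\ell)\oplus\ell^\perp$ is a $J$-invariant orthogonal decomposition into ideals, producing a holomorphic isometric splitting $\widetilde M=(SU(2)\times\R)\times\R^2$. The second factor is $\C$ with the flat metric. For the first factor, a suitable exponential-type map $(g,t)\mapsto e^{\lambda t}\,g\cdot e_1$, with $\lambda>0$ tuned to the ratio of the metric parameters, realizes $(SU(2)\times\R,g\vert,J\vert)$ as a biholomorphic isometry onto $\C^2\setminus\{0\}$ equipped with a positive constant multiple of the standard Hopf metric $\omega_g=\tfrac{\sqrt{-1}}{|z|^2}\partial\bar\partial|z|^2$.

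Finally, if $M$ is compact and simply-connected then $G'=\widetilde M=M$ is itself compact, forcing $k=0$ and $G=SU(2)\times SU(2)$; this yields the last assertion. I expect the main obstacle to be the verification in the middle case that the explicit exponential map above is a biholomorphism carrying the Samelson complex structure to the standard one on $\C^2\setminus\{0\}$ and the bi-invariant metric to a multiple of $\omega_g$. This is a direct but somewhat tedious computation using the Hopf fibration, the antihermiticity of elements of $\mathfrak{su}(2)$ acting on $\C^2$, and the explicit form of $\omega_g$; the ratio of the $SU(2)$- and $\R$-scale factors in $g$ precisely determines the parameter $\lambda$.
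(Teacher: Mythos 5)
Your proposal is correct, and its skeleton coincides with the paper's: reduce via Theorem 1 (whose proof shows $\widetilde{M}$ is a Samelson space) and Milnor's Lemma to the three possibilities $(G,k)=(\{e\},6)$, $(SU(2),3)$, $(SU(2)\times SU(2),0)$, discard the first as flat K\"ahler $\C^3$, and analyze the remaining two through the Samelson--Pittie description of compatible left invariant complex structures; this is exactly the $n=3$ discussion in \S 3 of the paper. Your treatment of the middle case is a more explicit rendering of the paper's normal form: the line $\ell=\R\cdot Jt_0$ and the resulting $J$-invariant orthogonal splitting into ideals is what the paper compresses into the statement that, after rotating an orthonormal basis of $\R^3$, one has $JY=Z$, $JX=W_1$, $JW_2=W_3$; and your remark that the ratio of the two metric scales on $SU(2)\times\R$ is absorbed by rescaling the Euclidean coordinate (your $\lambda$) is a needed point the paper leaves implicit. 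The only genuine divergence is how the factor $(SU(2)\times\R,\,J)$ gets identified with $(\C^2\setminus\{0\},\,c\,\omega_g)$: you propose to verify directly that $(g,t)\mapsto e^{\lambda t}\,g\cdot e_1$ is a biholomorphic isometry and flag that computation as unfinished, whereas the paper sidesteps it by a symmetry argument --- since $\mathfrak{su}(2)\cong\mathfrak{so}(3)$, all compatible left invariant complex structures on $SU(2)\times\R$ are isomorphic to one another, so there is only one such Samelson surface up to a scaling constant, and Lemma 3 (the direct check that the Hopf metric is Bismut flat) together with the explicit identification $\phi(z)=(A_z,\log|z|)$ written down in \S 4 (precisely the inverse of your map, with $\lambda=1$) exhibits that unique model as the standard Hopf cover. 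So as written your proof is complete only modulo the flagged verification; that verification is genuinely routine and is in effect already contained in Lemma 3 and \S 4, but the uniqueness-of-$J$ argument is the cheaper way to close the step, and it is the route the paper takes.
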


Notice that the central Calabi-Eckmann manifold $S^3\times S^3$, being the only compact, simply-connected, three dimensional Bismut flat manifold, it just stands out as perhaps a perfect candidate for the hidden space of our universe in the non-K\"ahler Calabi-Yau theory.

\vsv

Also, in the case when $\widetilde{M}= ({\mathbb C}^2\setminus \{ 0\})\times {\mathbb C}$, there might not be any finite unbranched cover $M'$ of $M$ such that $M'$ is the product of an isosceles Hopf surface and an elliptic curve. In fact, $M'$ might not even be an elliptic fibration over a Hopf surface or a Hopf surface fibration over an elliptic curve. We will see an example of such kind at the end of \S 4.

\vsv

Using the fact that the only simply-connected compact simple Lie
groups in dimension less than $14$ are $SU(2)$, $SU(3)$, and
$Spin(5)$, one could have a similar discussion and classification on
compact Bismut flat manifolds in complex dimension $6$ or less.

\vsv

Finally, it is natural to wonder about the compactness assumption in
Theorem 1. By using a nice characterization of flat metric connection with skew-symmetric torsion on a Riemannian manifold, given by I. Agricola and T. Friedrich in \cite{AF}, we obtain the following generalization of the main theorem:

\begin{theorem} \label{noncompact} Let $(M^n,g)$ be a simply-connected  Hermitian
manifold whose Bismut connection is flat. Then there exists a
Samelson space $(G, J, g_0)$, namely, $G$ is a simply-connected
even-dimensional Lie group, with  $g_0$ a bi-invariant metric on $G$
and $J$ a compatible left invariant complex structure on $G$, such
that $M$ is an open complex submanifold of $G$ and $g=g_0|_{M}$.
\end{theorem}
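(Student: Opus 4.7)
The plan is to run the same kind of Cartan-style structure argument that underlies Theorem~\ref{main}, but to drop the compactness hypothesis by appealing from the start to the characterization of flat metric connections with totally skew-symmetric torsion due to Agricola and Friedrich~\cite{AF}. In the compact case the key depth is the parallelism $\nabla^b T^b = 0$; Agricola--Friedrich gives this parallelism for free on any Riemannian manifold carrying a flat metric connection with skew torsion, which is exactly the situation $\nabla^b$ is in. Once that is in hand, the remainder is Lie-theoretic integration.

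I would begin by combining $\nabla^b g=0$, $\nabla^b J=0$, $R^b=0$ and the Agricola--Friedrich identity $\nabla^b T^b=0$. Since $M$ is simply-connected and $\nabla^b$ is flat, parallel transport produces a globally defined $\nabla^b$-parallel orthonormal frame $\{e_1,\dots,e_{2n}\}$ on $M$ in which $g$, $J$ and $T^b$ all have constant components. Writing $T^b(e_i,e_j)=c^{k}_{ij}e_k$, the Bianchi identity for the flat connection $\nabla^b$ forces the $c^{k}_{ij}$ to satisfy Jacobi, so $\mathfrak{g}:=\operatorname{span}\{e_i\}$ is a real Lie algebra of dimension $2n$ carrying an $\operatorname{ad}$-invariant inner product (this is just the skew-symmetry of $T^b$) and a compatible linear complex structure. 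Let $G$ be the connected, simply-connected Lie group with Lie algebra $\mathfrak{g}$, $g_0$ the bi-invariant metric induced by the inner product, and $J_G$ the left-invariant almost complex structure induced by $J|_{T_{p_0}M}$ under the frame identification $T_{p_0}M\cong\mathfrak{g}$.

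Next I would construct the developing map $\Phi\colon M\to G$. Let $\{\alpha^i\}$ be the coframe dual to $\{e_i\}$; the parallelism of $T^b$ is equivalent to the Maurer--Cartan equation $d\alpha^i+\tfrac12 c^{i}_{jk}\,\alpha^j\wedge\alpha^k=0$. By Cartan's integration theorem for $\mathfrak{g}$-valued Maurer--Cartan forms on simply-connected manifolds, there is a smooth map $\Phi\colon M\to G$, unique up to left translation, with $\Phi^{\ast}\omega_G=\sum\alpha^i\otimes e_i$, where $\omega_G$ is the left Maurer--Cartan form of $G$. Because $\{\alpha^i\}$ is a coframe, $\Phi$ is automatically a local diffeomorphism, and by construction it pulls $g_0$ back to $g$ and $J_G$ back to $J$; in particular it intertwines the Bismut connections of $(G,g_0,J_G)$ and $(M,g,J)$.

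The main obstacle is to promote $\Phi$ from a local Hermitian isometry to an open embedding, i.e.~to prove injectivity. My plan is to use the matching between $\Phi$ and left translations in $G$: for each $g\in G$ near the identity, pullback along $\Phi$ gives a local isometry $\tilde L_g$ on $M$ preserving $\nabla^b$, $J$ and $g$, and simple-connectedness of $M$ allows one to extend these local symmetries consistently along paths, realizing $M$ as a piece of a genuine $G$-manifold. If $\Phi(q_1)=\Phi(q_2)$, a path $\gamma$ from $q_1$ to $q_2$ develops to a loop in the simply-connected $G$, and the trivial monodromy of $\Phi^{\ast}\omega_G$ together with the uniqueness clause in Cartan's theorem forces the two endpoints to coincide. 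This step is the direct analogue, in the non-compact simply-connected setting, of the Galois-cover bookkeeping behind Theorem~\ref{main}, and it is where I expect to have to be most careful. Once $\Phi\colon M\hookrightarrow G$ is an open embedding, integrability of $J$ on the open subset $\Phi(M)$ propagates to integrability of $J_G$ on all of $G$, because the Nijenhuis tensor of a left-invariant almost complex structure is left-invariant and therefore determined by its value at the identity; hence $(G,J_G,g_0)$ is a Samelson space containing $M$ as an open complex submanifold with $g=g_0|_M$, as asserted.
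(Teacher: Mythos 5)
Your opening step misquotes Agricola--Friedrich, and this is a genuine gap rather than a cosmetic one. Their Proposition 2.1 (Lemma 13 of the paper) says the torsion of a flat metric connection with skew-symmetric torsion is parallel with respect to the \emph{averaged} connection $\nabla' = \frac{2}{3}\nabla + \frac{1}{3}\nabla^b$, not with respect to $\nabla^b$ itself, and the two statements are far from equivalent: for a general flat metric connection with skew torsion, parallelism of the torsion under the flat connection is simply false --- the Cartan--Schouten flat connections on $S^7$ (octonion multiplication) carry torsion that is parallel for the averaged connection but \emph{not} for the flat connection itself, which is exactly why $S^7$ appears alongside Lie groups in the Cartan--Schouten classification. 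If $\nabla^b T^b=0$ came ``for free,'' your own downstream argument (constant structure constants, Jacobi from Bianchi, Maurer--Cartan) would manufacture a Lie group and the $S^7$ case could not exist. So in your global $\nabla^b$-parallel frame you may not assert that the components of $T^b$ are constant; they are a priori functions, and the Lie algebra $\mathfrak{g}$, the Maurer--Cartan equation, and the developing map all collapse. Closing this gap is precisely where the Hermitian structure must enter, and it is the actual content of the paper's proof: since $\nabla'$ is metric, $\nabla' T^b=0$ gives only that $|T^b|^2 = 24|T|^2$ is constant; then identity (38) of Lemma 11, $\partial \overline{\partial}\, |T|^2 = \sum T^i_{jk,\overline{l}}\, \overline{T^i_{jk,\overline{m}}}\ \varphi_m\wedge \overline{\varphi_l}$, forces $T^i_{jk,\overline{l}}=0$, while (32) of Lemma 9 gives $T^i_{jk,l}=0$; only after this does one know the torsion components are constants in a $\nabla^b$-parallel frame, after which the paper concludes exactly by the frame/bracket computation from the proof of Theorem 1, as you propose.

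A second, lesser problem: your injectivity argument for the developing map does not work. If $\Phi(q_1)=\Phi(q_2)$, the developed loop in $G$ is null-homotopic because $G$ is simply connected, but that says nothing about $q_1=q_2$: a local holomorphic isometry from a simply-connected \emph{incomplete} manifold need not be injective. Concretely, the universal cover of a flat annulus --- the strip $\{\,0<\mathrm{Re}\, w<\log 2\,\}$ with metric $e^{2\,\mathrm{Re}\, w}\,|dw|^2$ --- is simply connected and Bismut (indeed K\"ahler) flat, and its developing map into $G={\mathbb C}$ is $w\mapsto e^w$ up to a rigid motion, which is not injective. Without completeness one only obtains a holomorphic isometric immersion, i.e.\ a local identification of $M$ with open pieces of the Samelson space; to be fair, the paper itself is terse at exactly this point (it simply invokes the proof of Theorem 1, where compactness supplies completeness and makes the developing map a covering). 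Since you singled out injectivity as the delicate step but offered only the monodromy sketch, be aware that the sketch fails as stated; still, the decisive error in the proposal is the first one.
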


In other words, the universal cover of any Bismut flat manifold is
always an open part of a Samelson space. In particular, any
simply-connected, non-K\"ahler, Bismut flat surface is an open
subset in $({\mathbb C}^2\setminus \{ 0\} , cg)$, where $c>0$ is a
constant and $\omega_g= \frac{ \sqrt{-1} }{ |z|^2 } \partial
\overline{\partial } |z|^2  $ is the standard metric.

\vsv

In comparison, for manifolds with flat Chern connections, Boothby
\cite{Boothby} observed that there are non-compact Chern flat
surfaces whose torsion components are not constants. In \S 5, we will give more examples of such kind, including a complete one.

\vsv

For Hermitian manifolds with flat Riemannian connection, there are also lots of non-compact examples with non-constant norm of torsion, even in complex dimension $2$. It turns out that locally such structures are determined by three holomorphic functions. We will give some examples of such surfaces in the end. All such examples are necessarily incomplete, since a complete flat Riemannian $4$-manifold is uniformized by the flat Euclidean space ${\mathbb R}^4$, and all orthogonal complex structures on ${\mathbb R}^4$ are the standard ones, by a result of Salamon and Viaclovsky (\cite{SV}, Theorem 1.3).

\vsv

The paper is organized as follows: In Section 2, we  collect some
preliminary results. In Section 3, we recall the construction of
Samelson and Pittie on left invariant complex structures on
even-dimensional compact Lie groups. In Section 4, we discuss the
general properties of Bismut flat manifolds, and give the proofs of
Theorem and the corollaries. In Section 5, we discuss the non-compact cases.

\vs

\vsv

\vsv

\vsv

\section{Preliminaries}

In this section, we will collect some preliminary results and fix the  notations and terminologies. More details can be found in our earlier work \cite{YZ}, but we will try to make things self-contained here for the convenience of the readers.

\vsv

Let $(M^n,g)$ be a Hermitian manifold, with $n\geq 2$. We will
denote by $\nabla$, $\nabla^c$, and $\nabla^b$ the Riemannian, Chern, and Bismut
connection of the metric $g$, and by $R$, $R^c$, and $R^b$  their curvatures, called
the Riemannian, Chern, or Bismut curvature tensor, respectively. (In \cite{YZ} we used the term Hermitian instead of Chern. The latter is a less ambiguous in this context).

\vsv

Let  $T^{1,0}M$ be the bundle of complex tangent vector fields of type $(1,0)$, namely,
complex vector fields of the form  $v- \sqrt{-1}Jv$, where $v$ is a real vector field on $M$. Let $\{ e_1, \ldots , e_n\}$ be a local frame of $T^{1,0}M$ in a
neighborhood in $M$. Write $e=\ ^t\!(e_1, \ldots , e_n) $
 as a column vector. Denote by $\varphi = \ ^t\!(\varphi_1, \ldots ,
  \varphi_n)$ the column vector of local $(1,0)$-forms which is the
coframe dual to $e$. For the Chern connection $\nabla^c$ of $g$,
let us denote by $\theta$,  $\Theta$ the matrices of connection and
curvature, respectively, and by $\tau$ the column vector of the
torsion $2$-forms, all under the local frame $e$. Then the structure
equations and Bianchi identities are
\begin{eqnarray}
d \varphi & = & - \ ^t\!\theta \wedge \varphi + \tau,  \label{formula 1}\\
d  \theta & = & \theta \wedge \theta + \Theta. \\
d \tau & = & - \ ^t\!\theta \wedge \tau + \ ^t\!\Theta \wedge \varphi, \label{formula 3} \\
d  \Theta & = & \theta \wedge \Theta - \Theta \wedge \theta.
\end{eqnarray}
Note that under a frame change $\tilde{e}=Pe$, the corresponding
forms are changed by
$$ \tilde{\varphi }= \ ^t\!P^{-1} \varphi , \ \ \tilde{\theta } =
P\theta P^{-1} + dPP^{-1}, \ \ \tilde{\Theta }= P\Theta P^{-1},
\ \ \tilde{\tau } = \ ^t\!P^{-1} \tau $$
In particular, the types of the $2$-forms in $\Theta $
and $\tau$ are independent of the choice of the frame $e$. Also, the compatibility of $\nabla^c$ with the metric means that when $e$ is unitary, both $\theta$ and $\Theta$ would be skew-Hermitian. Using these facts, and by taking $e$ to be either holomorphic or unitary, we know that the entries of $\Theta$ or $\tau $ are always $(1,1)$ or $(2,0)$ forms, under any frame.

\vsv

Let us write  $\langle \ , \rangle $ for the (real) inner product given by the Hermitian metric $g$, and extend it bilinearly over ${\mathbb C}$. Under the frame $e$, let us denote the components of the Riemannian connection $\nabla$ as
$$ \nabla e = \theta_1 e + \overline{\theta_2 }\overline{e} ,
\ \ \ \nabla \overline{e} = \theta_2 e + \overline{\theta_1
}\overline{e} ,$$ then the matrices of connection and curvature for
$\nabla $ become:
$$ \hat{\theta } = \left[ \begin{array}{ll} \theta_1 & \overline{\theta_2 } \\ \theta_2 & \overline{\theta_1 }  \end{array} \right] , \ \  \  \hat{\Theta } = \left[ \begin{array}{ll} \Theta_1 & \overline{\Theta}_2  \\ \Theta_2 & \overline{\Theta}_1   \end{array} \right] $$
 where
\begin{eqnarray}
\Theta_1 & = & d\theta_1 -\theta_1 \wedge \theta_1 -\overline{\theta_2} \wedge \theta_2 \\
\Theta_2 & = & d\theta_2 - \theta_2 \wedge \theta_1 - \overline{\theta_1 } \wedge \theta_2  \label{formula 7}\\
d\varphi & = & - \ ^t\! \theta_1 \wedge \varphi - \ ^t\! \theta_2
\wedge \overline{\varphi } ,
\end{eqnarray}
and under the frame change $\tilde{e}=Pe$,
$ \overline{\tilde{e}} = \overline{P}\overline{e}$,
the above matrices of forms are changed by
$$ \tilde{\theta}_1  = P\theta_1P^{-1} +dPP^{-1}, \ \ \tilde{\theta}_2  =
\overline{P} \theta_2 P^{-1}, \ \ \tilde{\Theta}_1 = P\Theta_1P^{-1},
\ \ \tilde{\Theta}_2 = \overline{P}\Theta_2P^{-1} $$
Following \cite{YZ}, we will write
\begin{equation}
\gamma = \theta_1 - \theta .
\end{equation}
We have $\tilde{\gamma } = P\gamma P^{-1}$ under the frame change, so $\gamma$ represents a tensor.
The compatibility of $\nabla$ with the metric implies that when $e$ is unitary,
both
$\theta_2 $ and $\Theta_2$ are skew-symmetric, while $\theta_1$,
$\gamma$, or $\Theta_1$ are skew-Hermitian.

\vsv

Let $\gamma = \gamma ' + \gamma ''$ be the decomposition
of $\gamma$ into $(1,0)$ and $(0,1)$ parts. Denote by $T_{ij}^k=-T_{ji}^k$ the components of $\tau$:
\begin{equation}
\tau_k = \sum_{i,j=1}^n T_{ij}^k \varphi_i\wedge \varphi_j \ = \sum_{1\leq i<j\leq n} 2 \ T_{ij}^k \varphi_i\wedge \varphi_j
\end{equation}
Note that our $T_{ij}^k$ is only half of the components of the torsion $\tau$ used in some other literature where the second sigma term is used. As observed in \cite{YZ},  when $e$ is unitary, $\gamma $ and $\theta_2$ take the following simple forms:
\begin{equation}
(\theta_2)_{ij} = \sum_{k=1}^n \overline{T^k_{ij}} \varphi_k, \ \ \ \ \gamma_{ij} = \sum_{k=1}^n ( T_{ik}^j \varphi_k - \overline{T^i_{jk}} \overline{\varphi}_k )
\end{equation}

Next, let us recall Gauduchon's {\em torsion $1$-form} $\eta$ which is
defined to be the trace of $\gamma'$ (\cite{Gauduchon}). Under any
frame $e$, it has the expression:
\begin{equation}
\eta = \mbox{tr}(\gamma') = \sum_{i,j=1}^n T^i_{ij}\varphi_j
\end{equation}
A direct computation shows that
\begin{equation}
\partial \omega^{n-1} = -2 \ \eta \wedge \omega^{n-1},
\label{formula 15}
\end{equation}
where $\omega $ is the K\"ahler (or metric) form of $g$. The metric $g$ is said to be {\em balanced} if
$\omega^{n-1}$ is closed. The above identity shows that $g$ is balanced
if and only if $\eta =0$. When $n=2$, $\eta =0$ means $\tau =0$, so
balanced complex surfaces are K\"ahler. But in dimension $n\geq 3$, $\eta $
contains less information than $\tau$.

\vsv

Under our notations, the components of the Chern and  Riemannian curvature tensors are
given by
\begin{equation}
R^c_{i\overline{j}k\overline{l}}=\sum_{p=1}^n \Theta_{ip}(e_k,
\overline{e}_l)g_{p\overline{j}}, \ \ \ R_{abcd}=\sum_{e=1}^{2n}
\hat{\Theta}_{ae}(e_c,e_d)g_{eb}  \label{formula 16}
\end{equation}
where $a, \ldots , e$ are between $1$ and $2n$, with
$e_{n+i}=\overline{e}_i$. Note that
$g_{ij}=g_{\overline{i}\overline{j}}=0$, so we have
\begin{eqnarray}
R_{i\overline{j}k\overline{l}} & = & \sum_{p=1}^n
(\Theta_1^{1,1})_{ip}(e_k, \overline{e}_l)g_{p\overline{j}}, \ \
R_{\overline{i}\overline{j}kl} \ = \  \sum_{p=1}^n
(\Theta_2^{2,0})_{ip}(e_k, e_l)  g_{p\overline{j}}
\label{formula 17}\\
R_{\overline{i}\overline{j} k\overline{l}} & = & R_{k\overline{l}
\overline{i} \overline{j} } \ = \ \sum_{p=1}^n (\Theta_2^{1,1})_{ip}
(e_k, \overline{e}_l) g_{p\overline{j}} \ =
\ \sum_{p=1}^n (\Theta_1^{0,2})_{kp}(\overline{e}_i, \overline{e}_j)g_{p\overline{l}}  \label{formula 18} \\
R_{\overline{i}\overline{j}\overline{k}\overline{l}} & = & R_{ijkl}
\ = \ 0  \label{formula 19}
\end{eqnarray}
The last line is because $\Theta_2^{0,2}=0$ by Lemma 1 of \cite{YZ}, a  property for general Hermitian metric discovered by Gray in
\cite{Gray} (Theorem 3.1 on page 603). The following lemma  is taken from \cite{YZ} (Lemma 7):

\vsv

\begin{lemma}  \label{lemma 1}
Let $(M^n,g)$ be a Hermitian manifold. Let $e$
be a unitary frame in $M$, then
\begin{eqnarray}
2T^k_{ij,\ \overline{l}} & = &
R^c_{i\overline{k}j\overline{l}} - R^c_{j\overline{k}i\overline{l}}
\label{formula 21}\\
R_{ijk\overline{l}} \ & = & T^l_{ij,k} + T^l_{ri} T^r_{jk} -
T^l_{rj} T^r_{ik}  \label{formula 22}
\end{eqnarray}
\begin{eqnarray}
R_{ij\overline{k}\overline{l}} & = &
T^l_{ij,\overline{k}} - T^k_{ij,\overline{l}} +
2T^r_{ij} \overline{T^r_{kl}} + T^k_{ri} \overline{ T^j_{rl} } +
T^l_{rj} \overline{ T^i_{rk} } - T^l_{ri} \overline{ T^j_{rk} } -
T^k_{rj} \overline{ T^i_{rl} }   \label{formula 23}\\
R_{i\overline{j}k\overline{l}} & = &
R^c_{i\overline{j}k\overline{l}} - T^j_{ik,\overline{l}} -
\overline{ T^i_{jl,\overline{k}} } + T^r_{ik} \overline{ T^r_{jl} }
- T^j_{rk} \overline{ T^i_{rl} } - T^l_{ri} \overline{ T^k_{rj} }
\label{formula 24}
\end{eqnarray}
where the index $r$ is summed over $1$ through $n$, and the index after the comma stands for covariant derivative with respect to the Chern connection $\nabla^c$.
\end{lemma}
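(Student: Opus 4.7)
The plan is to work in a unitary frame $e$ throughout, and to leverage two relations between the Chern and Riemannian connection data: the identity $\theta_1 = \theta + \gamma$, and the explicit expressions for $\theta_2$ and $\gamma$ in terms of the torsion coefficients $T^k_{ij}$ given earlier in this section. Together with the structure equations, these reduce everything to algebraic bookkeeping in the bidegree decomposition of the curvature forms.

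For the first identity, the natural tool is the Chern Bianchi identity $d\tau = -\,{}^t\!\theta \wedge \tau + {}^t\!\Theta \wedge \varphi$. Taking its $(1,2)$-component and isolating the coefficient of $\overline{\varphi}_l \wedge \varphi_i \wedge \varphi_j$ in the $k$-th row, the left side produces the antiholomorphic covariant derivative $2T^k_{ij,\overline{l}}$, while the right side produces the antisymmetric combination of Chern curvature components asserted; the $(1,2)$-piece of ${}^t\!\theta \wedge \tau$ drops out after one uses that $\theta$ is skew-Hermitian in a unitary frame.

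For the remaining three identities, I would substitute $\theta_1 = \theta + \gamma$ into the Riemannian structure equations for $\Theta_1$ and $\Theta_2$. Using $d\theta = \theta\wedge\theta + \Theta$, this yields
\[
\Theta_1 = \Theta + d\gamma - \theta\wedge\gamma - \gamma\wedge\theta - \gamma\wedge\gamma - \overline{\theta_2}\wedge\theta_2,
\]
and an analogous expansion for $\Theta_2$. Inserting the explicit formulas for $\gamma$ and $\theta_2$ and expanding, the $(1,1)$-part of $\Theta_1$ paired with $R_{i\overline{j}k\overline{l}}$ yields the fourth identity; the $(2,0)$-part of $\Theta_2$ paired with $R_{\overline{i}\overline{j}kl}$ yields the second; and the $(1,1)$-part of $\Theta_2$ paired with $R_{\overline{i}\overline{j}k\overline{l}}$ yields the third. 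The cross-term $\overline{\theta_2}\wedge\theta_2$ is purely of type $(1,1)$ and so contributes only the torsion-quadratic terms $T^r_{ik}\overline{T^r_{jl}}$-type corrections on the right-hand side of the $R_{i\overline{j}k\overline{l}}$ identity.

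The main obstacle is combinatorial rather than conceptual: one must track indices, signs, the antisymmetry $T^k_{ij} = -T^k_{ji}$, and the conjugation rules across more than a dozen monomials, while carrying out the bidegree decomposition consistently. The covariant derivatives $T^k_{ij,r}$ and $T^k_{ij,\overline{r}}$ have to be extracted from $dT^k_{ij}$ modulo the Chern connection form, which means reading the exterior derivative against the right piece of each structure equation; keeping the sign conventions straight across $\Theta_1$ and $\Theta_2$, and between the $(2,0)$ and $(1,1)$ outputs of $\Theta_2$, is where most of the care is needed.
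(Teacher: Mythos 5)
Your plan coincides with how this lemma is actually established: the paper itself contains no proof of the statement --- it is quoted verbatim from Lemma~7 of \cite{YZ} --- and the computation there (whose ingredients are exactly the ones set up in \S 2 here) runs along your lines: substitute $\theta_1=\theta+\gamma$ into the structure equations for $\Theta_1$ and $\Theta_2$, insert the unitary-frame expressions $(\theta_2)_{ij}=\sum_k \overline{T^k_{ij}}\varphi_k$ and $\gamma_{ij}=\sum_k(T^j_{ik}\varphi_k-\overline{T^i_{jk}}\overline{\varphi}_k)$, and read off components through \eqref{formula 17}--\eqref{formula 18}, with the first identity extracted from the Chern Bianchi identity \eqref{formula 3}.

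Two details in your sketch are wrong as stated and would derail the bookkeeping if followed literally. First, the relevant component of \eqref{formula 3} is the $(2,1)$ part (your coefficient form $\overline{\varphi}_l\wedge\varphi_i\wedge\varphi_j$ is of type $(2,1)$, not $(1,2)$), and the term ${}^t\theta\wedge\tau$ does \emph{not} drop out: in a unitary frame the Chern connection matrix has a nonvanishing $(0,1)$ part, and skew-Hermitian symmetry only relates it to the conjugate transpose of the $(1,0)$ part, it does not kill it. The $(2,1)$ piece of ${}^t\theta\wedge\tau$ is precisely the upper-index connection term that must be absorbed, together with the lower-index terms generated through $d\varphi_i\wedge\varphi_j$ via \eqref{formula 1}, into the full Chern covariant derivative $T^k_{ij,\overline{l}}$; discarding it leaves a partial derivative in place of a covariant one and the resulting identity is frame-dependent, hence false. (Alternatively, compute at a point in a unitary frame with $\theta(p)=0$.) Second, your pairing of the two pieces of $\Theta_2$ with the middle two identities is swapped: since $R_{ij\overline{k}\overline{l}}=\overline{R_{\overline{i}\overline{j}kl}}$, the $(2,0)$ part of $\Theta_2$ yields \eqref{formula 23}, the identity carrying the antiholomorphic derivatives $T^l_{ij,\overline{k}}-T^k_{ij,\overline{l}}$, whereas \eqref{formula 22}, which carries the holomorphic derivative $T^l_{ij,k}$, comes from the $(1,1)$ part via $R_{ijk\overline{l}}=-\,\overline{R_{\overline{i}\overline{j}l\overline{k}}}$ and \eqref{formula 18}. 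A quick type check confirms this: $\Theta_2^{2,0}$ involves $\overline{T^k_{ij,\overline{l}}}$ while $\Theta_2^{1,1}$ involves $\overline{T^k_{ij,l}}$. With these two corrections your outline is the proof.
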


Note that these formula were the main computational tools used in \cite{YZ}. However, in our present situations, we would prefer to use the Bismut connection $\nabla^b$ and Bismut covariant differentiation instead of $\nabla^c$. To give the precise formula, let us start with the description of $\nabla^b$ under our frame work. Again let us fix a Hermitian manifold $(M^n,g)$ with $n\geq 2$.

\vsv

 Recall that the Bismut connection $\nabla^b$ of $(M^n,g)$ is the unique connection that is compatible with the metric and the almost complex structure, and  its $(3,0)$ torsion is skew-symmetric. The existence and  uniqueness of $\nabla^b$ is proved by Bismut in \cite{Bismut}. Again let us fix a local type $(1,0)$ tangent frame $e$ and let $\varphi$ be its dual coframe. Since $\nabla^bJ=0$, we can write
$
\nabla^b e_i = \sum_{j=1}^n \theta^b_{ij} e_j
$.
We have the following:

\begin{lemma}  \label{lemma 2}
Under any frame $e$ of type $(1,0)$ tangent vectors, the components of the Bismut connection $\nabla^b$ are given by
\begin{equation}
\theta^b = \theta + 2 \gamma
\end{equation}
\end{lemma}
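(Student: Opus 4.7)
The plan is to invoke the uniqueness clause of Bismut's theorem: if I can exhibit a connection on $TM$ whose connection matrix under any $(1,0)$ frame is $\theta+2\gamma$ and which satisfies the three defining properties of $\nabla^b$, the formula follows. So I would define a candidate $\widetilde\nabla$ by setting $\widetilde\theta := \theta+2\gamma$ in an arbitrary local $(1,0)$ frame $e$, and verify in turn that (a) this is frame-independent, (b) $\widetilde\nabla J=0$, (c) $\widetilde\nabla g=0$, and (d) the $(3,0)$ part of its torsion is totally skew-symmetric.

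Steps (a)--(c) should be essentially cost-free. For (a), the transformation rules from the preliminaries, $\widetilde\theta = P\theta P^{-1}+dP\,P^{-1}$ and $\widetilde\gamma = P\gamma P^{-1}$, combine to give exactly $\widetilde\theta^b = P\theta^b P^{-1}+dP\,P^{-1}$, the correct inhomogeneous law of a linear connection. For (b), the matrix $\theta^b$ acts on the $(1,0)$ frame $e$ alone and produces only $(1,0)$ combinations of $e$ (there is no $\overline e$-piece, in contrast to the Riemannian connection which also carries a $\overline{\theta_2}$ block), so $\widetilde\nabla$ preserves the splitting $T_{\mathbb C}M = T^{1,0}M\oplus T^{0,1}M$ and hence preserves $J$. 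For (c), take $e$ unitary: then $\theta$ is skew-Hermitian from compatibility of $\nabla^c$ with $g$, and $\gamma$ is skew-Hermitian by the discussion following the definition of $\gamma$; therefore $\theta^b$ is skew-Hermitian, i.e. $\widetilde\nabla g = 0$.

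Step (d) will be the main computation. From the Chern structure equation $d\varphi = -\,{}^t\theta\wedge\varphi+\tau$ one gets the torsion $2$-form vector of $\widetilde\nabla$,
$$\widetilde\tau \;=\; d\varphi + {}^t\widetilde\theta\wedge\varphi \;=\; \tau + 2\,{}^t\gamma\wedge\varphi,$$
so the $(2,0)$ part of $\widetilde\tau$ equals $\tau$ plus the $(1,0)$ part of $2\,{}^t\gamma\wedge\varphi$, while its $(1,1)$ part comes entirely from the $(0,1)$ part of $2\,{}^t\gamma\wedge\varphi$. Taking $e$ unitary, the explicit expression $\gamma_{ij}=\sum_k(T^j_{ik}\varphi_k-\overline{T^i_{jk}}\,\overline\varphi_k)$ from formula~(11) lets me read off the components of $\widetilde\tau$ in terms of $T^k_{ij}$ and $\overline{T^k_{ij}}$. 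Lowering the upper index with the Hermitian metric $g$ and using the skew-symmetry $T^k_{ij}=-T^k_{ji}$ together with the Hermitian nature of $g$, I would check that the resulting real trilinear form $c(X,Y,Z)=g(T^{\widetilde\nabla}(X,Y),Z)$ is totally skew in $X,Y,Z$. Concretely, this reduces to verifying that the components $c_{i\overline j k}$, $c_{\overline i\overline j k}$, $c_{ijk}$ obey the appropriate cyclic antisymmetries, which in turn is forced by the way the contributions of $\gamma'$ and $\gamma''$ are paired through the metric.

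Once (a)--(d) are in place, the Bismut uniqueness theorem identifies $\widetilde\nabla$ with $\nabla^b$, whence $\theta^b = \theta+2\gamma$ under any $(1,0)$ frame. I expect the bookkeeping in step (d) to be the only genuine obstacle; everything else is dictated by the transformation rules and by the skew-Hermitian nature of $\theta$ and $\gamma$ in a unitary frame.
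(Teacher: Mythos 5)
Your proposal is correct and takes essentially the same approach as the paper: the paper also defines the candidate connection by $\theta+2\gamma$, treats metric and $J$-compatibility as immediate, computes the torsion $T^b(X,Y)=T^c(X,Y)+2\gamma_XY-2\gamma_YX$ in a unitary frame via formula (10), verifies $\langle T^b(X,Y),Z\rangle=-\langle T^b(X,Z),Y\rangle$, and concludes by Bismut's uniqueness. Your steps (a)--(c) merely make explicit what the paper dismisses as clear, and your structure-equation computation of $\widetilde\tau=\tau+2\,{}^t\gamma\wedge\varphi$ is the same torsion calculation written in form language.
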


\begin{proof} Clearly, the connection $\nabla^b$ defined by $\theta + 2\gamma $ is compatible with the metric and the almost complex structure, so we just need to verify that its $(3,0)$ torsion is skew-symmetric. We have
$T^b(X,Y) = T^c(X,Y) + 2 \gamma_XY - 2\gamma_Y X$. Since $T^c(e_i , \overline{e}_j)=0$ and $T^c(e_i,e_j)= 2\sum_k T_{ij}^ke_k$, so  under any unitary frame $e$ by $(10)$ we get
\begin{equation}
 T^b(e_i, e_j) = - 2 \sum_k T_{ij}^k e_k, \ \ \ \ T^b(e_i, \overline{e}_j) = 2\sum_k ( T_{ik}^j \overline{e}_k - \overline{T^i_{jk}} e_k ),
 \end{equation}
and from this it is easy to verify that $\langle T^b(X,Y), Z\rangle = - \langle T^b(X,Z), Y\rangle$ for any tangent vectors $X$, $Y$, and $Z$. So by the uniqueness we know $\nabla^b$ must be the Bismut connection.
\end{proof}

Using Lemma 2, we can compute the curvature of the Bismut connection for a given Hermitian metric. Let us illustrate this by considering the following example:

\begin{lemma}  \label{lemma 3}
Consider the Hermitian metric on ${\mathbb C}^2\setminus \{ 0\}$ with K\"ahler form $\omega = \frac{\sqrt{-1}} {|z|^2}  \partial \overline{\partial }  |z|^2$, where $z=(z_1,z_2)$ is the standard coordinate of ${\mathbb C}^2$, and $|z|^2=|z_1|^2+|z_2|^2$. We claim that the curvature of its Bismut connection is everywhere zero.
\end{lemma}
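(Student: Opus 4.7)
My plan is to verify the claim by a direct computation in a convenient unitary frame, shortened to a one-point check by exploiting the large isometry group of $\omega_g$. I take the unitary frame $e_i = |z|\,\partial/\partial z_i$ on $\mathbb{C}^2\setminus\{0\}$, whose dual coframe is $\varphi_i = dz_i/|z|$; in this frame $g(e_i,\overline{e}_j) = \delta_{ij}$. A short computation of $d\varphi_i$ (equivalently, of the Chern connection of the conformally flat metric $e^{2u} g_{\mathrm{Eucl}}$ with $e^{2u}=|z|^{-2}$) yields the Chern torsion components
\[
T^{k}_{ij} \;=\; \frac{1}{|z|}\bigl(\delta_{kj}\,\overline{z}_i \;-\; \delta_{ki}\,\overline{z}_j\bigr).
\]

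Next, the formula for $\gamma_{ij}$ displayed in (11) expresses $\gamma'$ and $\gamma''$ explicitly in terms of $\varphi$, $\overline{\varphi}$ and the $T^k_{ij}$ just computed, and Lemma 2 gives the Bismut connection matrix as $\theta^b = \theta + 2\gamma$. I would then differentiate and substitute into $\Theta^b = d\theta^b - \theta^b \wedge \theta^b$, reducing the claim to showing that the resulting $2\times 2$ matrix of $2$-forms is identically zero.

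To avoid long bookkeeping I would exploit the large symmetry of $\omega_g$: it is manifestly invariant under the standard $U(2)$-action on $\mathbb{C}^2$, and also under the real dilations $z\mapsto e^{t}z$ (the factor $|z|^{-2}$ absorbing $e^{2t}$). Together these act transitively on $\mathbb{C}^2\setminus\{0\}$, so since $\Theta^b$ is a tensor it suffices to check $\Theta^b = 0$ at a single chosen point. At the base point $z=(1,0)$ only the single component $T^{2}_{12}=1$ survives among the torsion components, and the remaining algebraic check of $\Theta^b(1,0)=0$ becomes short.

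The main obstacle is not conceptual but the careful tracking of the covariant-derivative terms that appear in $d\gamma$; the symmetry reduction above is what keeps this manageable. As a conceptual cross-check, the map $(s,v)\mapsto e^{s}v$ identifies $(\mathbb{C}^2\setminus\{0\},\omega_g)$ isometrically with the bi-invariant Lie group $\mathbb{R}\times SU(2)$, equipped with the left-invariant compatible complex structure coming from the inclusion $\mathbb{R}_{>0}\cdot SU(2)\subset GL(2,\mathbb{C})$. This exhibits the space as a Samelson space, and its Bismut flatness then follows from the classical fact recalled in the introduction (see \cite{AI}, \cite{Joyce}).
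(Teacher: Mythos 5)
Your main line of argument is essentially the paper's own proof: the same unitary frame $e_i = |z|\,\partial/\partial z_i$, the same computation of the Chern torsion and of $\theta^b = \theta + 2\gamma$ via Lemma 2 and formula (11), followed by the direct verification $\Theta^b = d\theta^b - \theta^b \wedge \theta^b = 0$, which is exactly what the paper does (and then omits the final expansion as ``straightforward''). Your two additions are both sound and worthwhile. The symmetry reduction is legitimate: $U(2)$ and the dilations $z \mapsto e^t z$ are biholomorphic isometries of $\omega_g$ acting transitively on ${\mathbb C}^2\setminus\{0\}$, and $\Theta^b$ transforms tensorially under frame changes, so vanishing at the single point $(1,0)$ suffices --- though note this does not spare you from differentiating $\theta^b$ symbolically first; it only lets you evaluate the resulting forms at one point rather than simplify them globally. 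Your ``cross-check'' is in fact a complete alternative proof, and it is the route the paper itself implicitly endorses: the identification $z \mapsto (A_z, \log|z|)$ of $({\mathbb C}^2\setminus\{0\}, \omega_g)$ with $SU(2)\times {\mathbb R}$ carrying a bi-invariant metric and a compatible left-invariant complex structure appears in \S 4, and Bismut flatness of such Samelson spaces is Lemma 6 (Alexandrov--Ivanov); that argument is cleaner and generalizes, while the direct computation has the virtue of being self-contained at this early point of the paper. One caution on the computational route: your displayed torsion is inconsistent with the paper's normalization $\tau_k = \sum_{i,j} T^k_{ij}\,\varphi_i \wedge \varphi_j$, under which $\tau = -2\,\partial \log|z| \wedge \varphi$ gives $T^1_{12} = \overline{z}_2/(2|z|)$ and $T^2_{12} = -\overline{z}_1/(2|z|)$; your formula is off by a factor of $-2$, and since you feed these components into the paper's (11) and Lemma 2, this must be corrected before the one-point check of $\Theta^b(1,0)=0$ can close.
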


\begin{proof}  Let $e$ be the unitary frame
$ e_i = |z| \frac{\partial }{\partial z_i} $ for $i=1$, $2$, its dual coframe is $\varphi_i= \frac{1}{|z|}dz_i$. Under the frame $e$, we have
$$ \theta = (\overline{\partial } - \partial )\log |z| I, \ \ \ \ \tau = - 2 \partial \log |z| \wedge \varphi .$$
So we get $T^1_{12} = \frac{\overline{z}_2} {2|z|}$ and  $T^2_{12} = - \frac{\overline{z}_1} {2|z|}$, thus
$$ \gamma = \frac{1}{2|z|^2} \left[ \begin{array}{ll} \overline{z}_2 dz_2 - z_2 d \overline{z}_2    &    z_2 d \overline{z}_1 -    \overline{z}_1 dz_2  \\   z_1 d \overline{z}_2-    \overline{z}_2 dz_1   &  \overline{z}_1 dz_1 - z_1 d \overline{z}_1 \end{array} \right] , \ \ \ \mbox{and} \ \ \
\ \ \theta^b = \theta + 2\gamma = \frac{1}{2|z|^2} \left[ \begin{array}{cc}   A & 2B \\ -2\overline{B} & - A  \end{array} \right]    , $$
where
$$ A =  z_1 d \overline{z}_1 + \overline{z}_2 dz_2  -  z_2 d \overline{z}_2- \overline{z}_1 dz_1 , \ \ \ \ B =  z_2 d \overline{z}_1 - \overline{z}_1 dz_2.$$
Now it is a straight forward computation to verify that $\Theta^b = d\theta^b - \theta^b \wedge \theta^b = 0$, and we omit it here. So $\omega$ is Bismut flat. \end{proof}

Recall that a compact complex surface $M^2$ is called a Hopf surface, if its universal cover is ${\mathbb C}^2\setminus \{0\}$. A Hopf surface $M^2$ is called a primary Hopf surface, if $\pi_1(M) \cong {\mathbb Z}$. Kodaira \cite{Kodaira} proved that all Hopf surfaces are finite undercovers of primary Hopf surfaces, and all primary Hopf surfaces are diffeomorphic to $S^3\times S^1$, in the form $M_{a,b}$ or $M_{a;m}$ below, where $a$ and $b$ are complex numbers satisfying $0<|a|\leq |b|<1$ and $m\geq 2$ is an integer. Here
$$ M_{a,b} = ({\mathbb C}^2 \setminus \{ 0\} )/{\mathbb Z}\phi , \  \ M_{a; m} = ({\mathbb C}^2 \setminus \{ 0\} )/{\mathbb Z}\psi ,
\ \ \ \mbox{where} \ \ \ $$
$$ \phi (z_1, z_2) = (az_1 , bz_2),  \ \ \ \mbox{and} \ \ \psi (z_1, z_2) = (az_1 , z_1^m+ a^mz_2) , $$
with $(z_1,z_2)$  the standard coordinate of ${\mathbb C}^2$.

\vsv

We will call a Hopf surface $M$ covered by $M_{a,b}$ with $|a|=|b|$ an {\em isosceles Hopf surface.} Its fundamental group is an extension of ${\mathbb Z}\phi $ by a finite group $F$, where $\phi (z_1, z_2)=(az_1,bz_2)$ with $0<|a|=|b|<1$, and $F$ is a finite subgroup of $U(2)$ (and in fact $F$ is a subgroup of $U(1)\times U(1)$ when $a\neq b$, see \cite{Kato} for more details). Clearly, any element of $\pi_1(M)$ preserves $\omega$ in Lemma 3, so the metric descends down to $M$, and we  get

\begin{lemma} \label{Hopf}
Any isosceles Hopf surface  admits a Bismut flat Hermitian metric.
\end{lemma}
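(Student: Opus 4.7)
The plan is to show that the standard metric $\omega = \frac{\sqrt{-1}}{|z|^2}\partial\overline{\partial}|z|^2$ from Lemma~3 descends from the universal cover $\mathbb{C}^2\setminus\{0\}$ to any isosceles Hopf surface $M$, which will finish the proof since Bismut flatness is a local property and is already established upstairs.

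To do this, I would first recall that by the description preceding the lemma, $\pi_1(M)$ is generated by a finite subgroup $F\subset U(2)$ together with the cyclic generator $\phi(z_1,z_2)=(az_1,bz_2)$ satisfying $0<|a|=|b|<1$. The verification then splits into two cases. For $U\in F\subset U(2)$: unitarity of $U$ gives $U^*|z|^2=|z|^2$, and applying $\partial\overline{\partial}$ and dividing by the same unchanged factor shows $U^*\omega=\omega$. For the cyclic generator $\phi$: writing $f=|z|^2$, the crucial input is the isosceles condition $|a|=|b|=:\lambda$, which yields $\phi^*f=\lambda^2 f$. Then
\begin{equation*}
\phi^*\omega \;=\; \frac{\sqrt{-1}}{\phi^*f}\,\partial\overline{\partial}(\phi^*f) \;=\; \frac{\sqrt{-1}}{\lambda^2 f}\cdot \lambda^2\,\partial\overline{\partial}f \;=\; \omega,
\end{equation*}
so the $\lambda^2$ scaling cancels between numerator and denominator exactly because $\omega$ is the $\partial\overline{\partial}$-derivative of $\log f$ (up to a term that is also invariant). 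Hence every deck transformation of $M$ acts as a holomorphic isometry of $(\mathbb{C}^2\setminus\{0\},\omega)$.

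Consequently, $\omega$ descends to a well-defined Hermitian metric on $M$. The Bismut connection is determined locally by the metric and complex structure, so its curvature tensor on $M$ pulls back to the Bismut curvature on the universal cover, which vanishes by Lemma~3. Therefore the descended metric is Bismut flat, proving the lemma.

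There is no real obstacle here; the entire point is that the isosceles hypothesis $|a|=|b|$ is precisely the condition that makes $\phi$ act as a conformal rescaling of $|z|^2$, which is in turn the condition under which the conformally invariant form $\frac{\sqrt{-1}}{|z|^2}\partial\overline{\partial}|z|^2$ is preserved. Had $|a|\neq|b|$, this cancellation would fail and no constant rescaling of $\omega$ could be made $\phi$-invariant.
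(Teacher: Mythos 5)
Your proof is correct and takes essentially the same route as the paper, which disposes of this lemma in one line by noting that every element of $\pi_1(M)$ preserves the metric $\omega$ of Lemma~3, so that the Bismut flat metric descends. You have simply made explicit the invariance check the paper calls ``clear'' --- the $U(2)$ part preserves $|z|^2$ outright, and the isosceles condition $|a|=|b|$ makes $\phi^*|z|^2=\lambda^2|z|^2$ a scaling that cancels in $\frac{\sqrt{-1}}{|z|^2}\partial\overline{\partial}|z|^2$ --- together with the locality of Bismut curvature.
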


Conversely, as a consequence of our main theorem, we shall see that
any compact Bismut flat Hermitian surface is either an isosceles
Hopf surface when it is non-K\"ahler, or, when it is K\"ahler,  a
flat complex torus or hyperelliptic surface.

\vsv

We conclude this section by stating the following well known result
with a sketched proof.

\begin{lemma}  \label{lemma 5}
Let $(M^n,g)$ be a Hermitian manifold whose Bismut connection
$\nabla^b$ is flat. Then given any $p\in M$, there exists a
neighborhood $p\in U\subseteq M$ and type $(1,0)$ unitary frame $e$
in $U$ which is $\nabla^b$-parallel, namely, $\nabla^b e_i=0$ in $U$
for each $1\leq i \leq n$.

\end{lemma}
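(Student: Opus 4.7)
The plan is to exploit the fact that $\nabla^b$ is simultaneously compatible with $g$ and with the almost complex structure $J$. Since $R^b\equiv 0$, the holonomy of $\nabla^b$ on any simply-connected neighborhood is trivial, so parallel transport is path-independent on such a neighborhood; moreover, because $\nabla^b J=0$ and $\nabla^b g=0$, parallel transport is a $\mathbb{C}$-linear isometry with respect to the Hermitian inner product $h$ induced by $g$ and $J$.

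Concretely, I would fix $p\in M$ and choose a simply-connected open neighborhood $U\ni p$ (say a geodesically convex ball for some auxiliary metric) together with a unitary basis $\{e_1(p),\ldots,e_n(p)\}$ of $T_p^{1,0}M$. For each $q\in U$, define $e_i(q)$ to be the $\nabla^b$-parallel transport of $e_i(p)$ along any smooth path in $U$ from $p$ to $q$; by flatness and simple connectedness of $U$, this is independent of the path, so $e_i$ is a well-defined section of $TM\otimes \C$ on $U$. Smoothness follows from the smooth dependence of the parallel transport ODE on its endpoints. By construction $\nabla^b e_i=0$ on $U$.

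It remains to check that $\{e_1,\ldots,e_n\}$ is a unitary $(1,0)$-frame on all of $U$, not merely at $p$. Because $\nabla^b J=0$, parallel transport commutes with $J$, hence preserves the $(1,0)$-eigenspace of $J$; therefore each $e_i(q)$ lies in $T_q^{1,0}M$. Because $\nabla^b g=0$, parallel transport preserves the Hermitian inner product $h=g(\cdot, \overline{\cdot})$; since $h(e_i(p),e_j(p))=\delta_{ij}$, the same holds at every $q\in U$. Thus $\{e_i\}$ is a local $\nabla^b$-parallel unitary $(1,0)$-frame, as required.

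There is no serious obstacle: everything is a standard consequence of the flat-connection ODE together with the compatibility $\nabla^b g=0$, $\nabla^b J=0$ established in the preceding discussion (in particular, Lemma \ref{lemma 2} gives the connection explicitly, but we do not need the formula for this argument). The only mild point is ensuring simple connectedness of the neighborhood, which is handled by shrinking $U$ to a ball.
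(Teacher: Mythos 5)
Your argument is correct and complete. It differs in mechanism from the paper's proof, though both are standard routes to the same fact: the paper observes that, because the curvature vanishes, the equations for a parallel frame (the entries of the connection matrix) form a completely integrable system in the sense of Frobenius, which directly yields a local $\nabla^b$-parallel frame of the real tangent bundle, and then uses $\nabla^b J=0$ to replace it by a type $(1,0)$ frame. You instead produce the frame by $\nabla^b$-parallel transport from a single fiber, using flatness to get homotopy invariance (hence path-independence on a simply-connected neighborhood) and smooth dependence of the transport ODE on endpoints for smoothness. The Frobenius route is shorter to state and needs no topological hypothesis on $U$ beyond smallness, since the integrability theorem is local by nature; your holonomy route makes the role of each compatibility explicit and, usefully, spells out the unitarity of the resulting frame via $\nabla^b g=0$ --- a point the paper's sketch leaves implicit (it only mentions $\nabla^b J=0$, although metric compatibility is of course what makes the parallel frame unitary rather than merely $(1,0)$). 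Your handling of the $(1,0)$ condition --- parallel transport commutes with $J$, so it preserves the eigenbundle $T^{1,0}M$ of the complexified tangent bundle --- is exactly right, and your remark that the explicit formula $\theta^b=\theta+2\gamma$ from Lemma \ref{lemma 2} is not needed is also accurate.
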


\begin{proof} Since the curvature of the connection
$\nabla^b$ is everywhere zero, the entries of the connection matrix
forms a completely integrable system, therefore there will be local
frame of the real tangent bundle of $M$ which is
$\nabla^b$-parallel. But $\nabla^b J=0$, so we have type $(1,0)$
complex tangent frame that is $\nabla^b$-parallel.
\end{proof}

\vs

\vsv

\vsv

\vsv

\section{The Samelson spaces}

In this section, let us recall Samelson's construction \cite{S} of
left invariant complex structures on even-dimensional compact Lie
groups, which comes from a choice of a maximal torus, a complex
structure on the Lie algebra of the torus, and a choice of positive
roots for the Cartan decomposition. We will also discuss Pittie's
theorem \cite{P} which states that all left invariant complex
structures on such groups are actually obtained this way. In \S 5 of
\cite{AI}, Alexandrov and Ivanov gave a nice description of both
results. Here for the convenience of the readers, we include some of
their arguments briefly, on those statements that we will need in
our later discussions.

\vsv

First let us begin with the following result, which was observed by
Alexandrov and Ivanov (\cite{AI}, p.263), and might be known to
other experts as well.  We include a proof here for the
convenience of the readers.

\begin{lemma} {\bf (Alexandrov-Ivanov)} \label{lemma AI}
Let $G$ be an even dimensional connected Lie group equipped with a
bi-invariant metric $g=\langle \ , \rangle $ and a left invariant
complex structure $J$ which is compatible with $g$. Then the
Hermitian manifold $(G, J, g)$ is Bismut flat.
\end{lemma}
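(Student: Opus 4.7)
The plan is to identify the Bismut connection $\nabla^b$ with the classical $(-)$-Cartan--Schouten connection $\nabla^-$ on $G$, and then observe that the latter is manifestly flat.

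First I would define $\nabla^-$ by declaring every left invariant vector field to be parallel; equivalently, for left invariant $X,Y$ one sets $\nabla^-_X Y = 0$ and extends by the Leibniz rule. A direct computation on left invariant frames gives torsion $T^-(X,Y)=-[X,Y]$ and curvature $R^-(X,Y)Z = -\nabla^-_{[X,Y]}Z = 0$, so $\nabla^-$ is flat. Next I would check the three compatibility properties that characterize the Bismut connection among metric connections:
\begin{enumerate}
\item \emph{Metric compatibility.} Since $g$ is left invariant, $\langle Y,Z\rangle$ is a constant function whenever $Y,Z$ are left invariant, and left invariant vector fields span the tangent bundle, so $X\langle Y,Z\rangle = 0 = \langle\nabla^-_X Y,Z\rangle+\langle Y,\nabla^-_X Z\rangle$.
\item \emph{Compatibility with $J$.} Because $J$ is left invariant, it sends left invariant fields to left invariant fields; both sides of $\nabla^-_X(JY)=J\nabla^-_X Y$ then vanish on a left invariant frame, so $\nabla^- J=0$.
\item \emph{Totally skew-symmetric torsion.} Bi-invariance of $g$ is equivalent to $\mathrm{ad}_Z$ being skew-symmetric for every $Z$, i.e.\ $\langle [X,Y],Z\rangle$ is totally skew in $(X,Y,Z)$. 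Since $\langle T^-(X,Y),Z\rangle = -\langle[X,Y],Z\rangle$ on left invariant fields and tensoriality then extends this to all fields, the $(3,0)$-torsion tensor is skew.
\end{enumerate}

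By the uniqueness part of Bismut's theorem recalled in the introduction, $\nabla^-$ must coincide with $\nabla^b$. Since $R^-\equiv 0$, this yields $R^b\equiv 0$, which is the claim.

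The only subtlety worth flagging is item~(iii): it is essential that the metric be \emph{bi-invariant}, not merely left invariant, since bi-invariance is exactly what makes $\langle[\cdot,\cdot],\cdot\rangle$ totally antisymmetric and hence what forces $\nabla^-$ to qualify as the Bismut connection. No serious obstacle is expected; the argument is a short identification of two connections via their defining properties.
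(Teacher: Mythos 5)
Your proposal is correct, and it takes a genuinely different route from the paper's proof. The paper fixes a unitary frame $\{e_1,\ldots,e_n\}$ of left invariant $(1,0)$ vector fields and computes $\nabla^b e_j$ directly, combining Lemma \ref{lemma 2} (the formula $\theta^b=\theta+2\gamma$) with the identity $\nabla_XY=\frac{1}{2}[X,Y]$ for the Levi-Civita connection of a bi-invariant metric; matching coefficients yields $T^i_{jk}=\frac{1}{2}C^i_{jk}$, hence $\nabla^b e_j=0$, so the left invariant frame is Bismut-parallel and $\nabla^b$ is flat. You instead bypass the frame computation entirely: you exhibit the Cartan--Schouten $(-)$-connection $\nabla^-$ (left invariant fields declared parallel), verify the three characterizing properties of the Bismut connection --- where, as you correctly flag, $\nabla^-g=0$ and $\nabla^-J=0$ need only left invariance, while total skewness of $\langle T^-(X,Y),Z\rangle=-\langle [X,Y],Z\rangle$ is exactly bi-invariance --- and invoke Bismut's uniqueness theorem; flatness of $\nabla^-$ is then immediate since $[X,Y]$ is again left invariant. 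Amusingly, this is the same template the paper uses to prove its Lemma \ref{lemma 2} (exhibit a candidate connection, check the axioms, quote uniqueness), so your argument is in effect a frame-free shortcut that merges that step with the flatness claim; both proofs ultimately rest on the same fact that left invariant fields are $\nabla^b$-parallel. What your route buys is brevity, coordinate-freeness, and a clean separation of where left versus bi-invariance enter; what it gives up is the explicit identity $T^i_{jk}=\frac{1}{2}C^i_{jk}$ and the resulting bracket formulas for $[e_i,e_j]$ and $[\overline{e_i},e_j]$ in terms of torsion components, which the paper extracts from this computation and reuses later (for instance in the proof of its Lemma 11 on the plurisubharmonicity of $|T|^2$ and in the proof of Theorem \ref{main}).
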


\begin{proof} Let $\{ e_1, \ldots ,
e_n\}$ be a unitary frame of left-invariant vector fields on $G$ of
type $(1,0)$. It suffices to show that $\nabla^b e_j=0$ for each
$j$. Since the metric is bi-invariant, it is well-known that the
Riemannian connection $\nabla$ is given by $\nabla_X Y = \frac{1}{2}
[X,Y]$ for left invariant vector fields. The integrability condition
on the complex structure means that we have $[e_i,e_j]=\sum_k
C_{ij}^k e_k$ for some constants $C^k_{ij}$, and since
\begin{eqnarray*}
\langle [\overline{e_i} , e_j ] , e_k \rangle & = & \langle [e_j, e_k ] , \overline{e_i}  \rangle \ = \ C_{jk}^i \\
\langle [\overline{e_i} , e_j ] , \overline{e_k} \rangle & = & - \langle [\overline{e_i}, \overline{e_k}], e_j  \rangle \ = \ - \overline{C_{ik}^j}
\end{eqnarray*}
we get $ [\overline{e_i} , e_j ] = \sum_k ( C_{jk}^i \overline{e_k} - \overline{C_{ik}^j} e_k)$. By Lemma 2, we have
\begin{eqnarray*}
\nabla^b_{e_i}e_j & = & \nabla^c_{e_i}e_j + 2\gamma_{e_i}e_j \ = \  \nabla_{e_i}e_j - \gamma_{e_i}e_j + 2 \gamma_{e_i}e_j \\
& = & \frac{1}{2} [ e_i, e_j] + \sum T_{ji}^k e_k \ = \ \sum (\frac{1}{2}C_{ij}^k - T_{ij}^k)e_k \\
\nabla^b_{\overline{e_i}}e_j & = & \nabla^c_{\overline{e_i}}e_j + 2\gamma_{\overline{e_i}}e_j \ = \   \nabla_{\overline{e_i}}e_j - \gamma_{\overline{e_i}}e_j - \sum \overline{ (\theta_2)_{jk}(e_i) } \overline{e_k} + 2 \gamma_{\overline{e_i}}e_j \\
& = & \frac{1}{2} [ \overline{e_i}, e_j] - \sum (\overline{T_{ki}^j} e_k + T_{jk}^i \overline{e_k} ) \ = \  \sum (\frac{1}{2} C_{jk}^i - T_{jk}^i )\overline{e_k} - \sum (\frac{1}{2} \overline{C_{ik}^j} - \overline{T_{ik}^j} )e_k
\end{eqnarray*}
Since $\nabla^b_{\overline{e_i}}e_j = \sum_k \theta^b_{jk}(\overline{e_i}) e_k$, we know from the last line above that $\frac{1}{2}C_{jk}^i = T_{jk}^i$, hence $\nabla^be_j=0$ for each $j$, and $\nabla^b$ is flat.
\end{proof}

Next, let $G$ be a connected Lie group equipped with a bi-invariant
metric $\langle \ , \rangle $. Denote by ${\mathfrak g}$ the Lie
algebra of $G$, and also denote by $\langle \ , \rangle $ the inner
product on ${\mathfrak g}$ induced by the metric of $G$. Since the
metric is bi-invariant, we have
$$ \langle [X,Y],Z\rangle = - \langle [X,Z],Y\rangle  $$
for any vectors $X$, $Y$, $Z$ in ${\mathfrak g}$. So if ${\mathfrak
a} \subset {\mathfrak g}$ is an ideal, denote by ${\mathfrak
a}^{\perp }$ its perpendicular compliment in ${\mathfrak g}$. By
letting $Y\in {\mathfrak a}$ and $Z\in {\mathfrak a}^{\perp }$ in
the above identity, we see that ${\mathfrak a}^{\perp }$ is also an
ideal in ${\mathfrak g}$. So we can write the Lie algebra as the
orthogonal direct sum of simple ideals. This leads to the following
Milnor's Lemma (see Lemma 7.5 of \cite{M}):

\begin{lemma}{\bf (Milnor) }  \label{Milnor}
Let $G$ be a simply-connected Lie group with a bi-invariant metric $\langle \ , \rangle$. Then $G$ is isomorphic and isometric to the product $G_1 \times \cdots \times G_r \times {\mathbb R}^k$ where each $G_i$ is a simply-connected compact simple Lie group and ${\mathbb R}^k$ is the additive vector group with the flat metric. Here  $0\leq k\leq \dim(G)$.
\end{lemma}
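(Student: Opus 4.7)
The plan is to reduce the statement about the Lie group $G$ to a statement about its Lie algebra $\mathfrak{g}$, and then integrate back using simple-connectedness. I would proceed in four steps.

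First, I would exploit the identity $\langle [X,Y],Z\rangle = -\langle [X,Z],Y\rangle$ (derived by differentiating the bi-invariance of $g$) to observe that, as already noted in the excerpt, the orthogonal complement of any ideal in $\mathfrak{g}$ is again an ideal. Iterating this, $\mathfrak{g}$ splits as an orthogonal direct sum $\mathfrak{g} = \mathfrak{g}_0 \oplus \mathfrak{g}_1 \oplus \cdots \oplus \mathfrak{g}_r$, where $\mathfrak{g}_0$ is the center and each $\mathfrak{g}_i$ ($i \geq 1$) is a non-abelian simple ideal. The $\mathfrak{g}_i$'s commute with one another since $[\mathfrak{g}_i, \mathfrak{g}_j] \subseteq \mathfrak{g}_i \cap \mathfrak{g}_j = 0$ for $i \neq j$.

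Second, I would show that each non-abelian simple factor $\mathfrak{g}_i$ is the Lie algebra of a compact simple Lie group. The Killing form satisfies
\[
B(X,X) = \operatorname{tr}(\mathrm{ad}_X \circ \mathrm{ad}_X) = -\operatorname{tr}(\mathrm{ad}_X \circ \mathrm{ad}_X^{\!\top}) \leq 0,
\]
where the transpose is taken with respect to $\langle\,,\,\rangle$ and the middle equality uses the skew-symmetry of $\mathrm{ad}_X$. Equality forces $\mathrm{ad}_X = 0$, i.e.\ $X$ lies in the center, which is trivial for a non-abelian simple Lie algebra. Hence $B$ is negative definite on $\mathfrak{g}_i$, and by Weyl's theorem the connected simply-connected Lie group $G_i$ integrating $\mathfrak{g}_i$ is compact (and simple).

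Third, since $G$ is connected and simply-connected, the Lie algebra splitting integrates to a Lie group isomorphism $G \cong G_0 \times G_1 \times \cdots \times G_r$ via the product of the inclusions of each factor (the map at the identity is an isomorphism of Lie algebras, and by simple-connectedness lifts to a global Lie group isomorphism). The pulled-back bi-invariant metric decomposes as the orthogonal product because the factors are mutually orthogonal ideals, so the decomposition is also isometric. Finally, $G_0$ is an abelian, simply-connected, connected Lie group, hence isomorphic to $\mathbb{R}^k$ for $k = \dim \mathfrak{g}_0$, and the bi-invariant metric on it is necessarily flat (left- and right-translation coincide in the abelian case and must preserve the metric, forcing constancy under the exponential identification $\mathfrak{g}_0 \cong \mathbb{R}^k$).

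The main obstacle is the compactness assertion in the second step: showing that negative definiteness of the Killing form forces the simply-connected group integrating $\mathfrak{g}_i$ to be compact. This is a classical but non-trivial input (Weyl's theorem), and I would simply quote it. Everything else is essentially a bookkeeping argument assembling the orthogonal ideal decomposition and integrating to the group level using simple-connectedness.
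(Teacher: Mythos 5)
Your proof is correct, and it follows the same route the paper takes: the paper itself only carries out your first step (the ad-invariance identity forcing orthogonal complements of ideals to be ideals, hence an orthogonal splitting into simple ideals plus center) and then cites Milnor's Lemma 7.5 for the rest, which is exactly the standard argument you supply (negative definiteness of the Killing form on each simple factor via skew-symmetry of $\mathrm{ad}_X$, Weyl's compactness theorem, and integration of the splitting using simple-connectedness). In other words, you have written out in full the classical proof that the paper's citation points to, with the only nontrivial external input, Weyl's theorem, correctly identified and quoted.
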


As is well known, the simply-connected compact simple Lie groups are fully classified, they are:

$A_n = SU(n+1)$, $n\geq 1$,  $\dim(A_n) = n(n+2)$;

$B_n = \mbox{Spin} (2n+1)$, $n\geq 2$, $\dim(B_n) = n(2n+1)$ ;

$C_n= \mbox{Sp}(2n)$, $n\geq 3$, $\dim(C_n) = n(2n+1)$;

$D_n = \mbox{Spin} (2n)$, $n\geq 4$, $\dim(D_n) = n(2n-1)$;

$E_6$, $\dim(E_6) = 78$;

$E_7$, $\dim(E_7) = 133$;

$E_8$, $\dim(E_8) = 248$;

$F_4$, $\dim(F_4) = 52$;

$G_2$, $\dim(G_2) = 14$.

\vsv

Note that the only ones in dimension less than $14$ are $M^3=SU(2)$, $M^8=SU(3)$, and $M^{10}=Spin(5)$. So only those three could  appear in a compact Bismut flat manifold of complex dimension less than or equal to $6$.

\vsv

Since any bi-invariant metric on a compact simple Lie group is a constant multiple of the Killing form, the bi-invariant metric on $G$ is unique up to constant multiples on the compact factors, and each $G_i$ is an Einstein manifold with positive Ricci curvature.

\vsv

Let us fix a simply-connected Lie group $G$ with a bi-invariant metric $\langle \ , \rangle $. We have $G=G_1 \times \cdots \times G_r\times {\mathbb R}^k$ as above. Let $G'=G_1 \times \cdots \times G_r\times T^k$ where $T_k$ is the torus. Then $G$ is the covering group of $G'$, and they share the same Lie algebra $\mathfrak g$.

\vsv

Now we assume that $\dim(G)$ is even. Note that the left invariant complex structures on $G$ or $G'$ are both in one-one correspondence with left invariant complex structures on $\mathfrak g$, which are linear maps $J: {\mathfrak g}  \rightarrow {\mathfrak g}$ such that $J^2=-I$ and
\begin{equation}
J ( [X,Y] -[JX,JY] ) = [JX,Y] + [X, JY]
\end{equation}
for any $X$, $Y$ in ${\mathfrak g}$.

\vsv

Samelson constructed left invariant complex structures $J$ on $G$
that is compatible with the metric, by choosing a maximal torus $K$
in $G'$, a complex structure on the Lie algebra $\mathfrak k$ of
$K$, and a choice of positive roots for the Cartan decomposition of
$\mathfrak g$. In \cite{AI}, \S 5, Alexandrov and Ivanov give a nice
description of Samelson's construction. Here we include a brief
account of it for the convenience of the  readers.

\vsv

Denote by  ${\mathfrak g}^c $ the complexification of ${\mathfrak
g}$. The existence of a compatible left invariant complex structure
$J$ is equivalent to the existence of a complex subspace ${\mathfrak
s} \subset {\mathfrak g}^c$, such that $\langle {\mathfrak s},
{\mathfrak s} \rangle =0$, ${\mathfrak s} \cap {\mathfrak g}=0$, and
${\mathfrak s} \oplus \overline{{\mathfrak s}} = {\mathfrak g}^c$.
Such a subspace is called a {\em Samelson subalgebra} of ${\mathfrak
g}^c$.

\vsv

Now let $K$ be a maximal torus of $G'$, and $\mathfrak k$ its Lie algebra. Denote by ${\mathfrak k}^c $ the complexification of ${\mathfrak k}$. When a set of positive roots $\alpha_1 , \ldots , \alpha_m$ is chosen, then it is well-known that one has the $\mbox{ad}(K)$-invariant decomposition
\begin{equation} {\mathfrak g}^c = {\mathfrak k}^c \oplus \sum_{j=1}^m {\mathfrak g}_{\alpha_j} \oplus \sum_{j=1}^m {\mathfrak g}_{-\alpha_j} ,
\end{equation}
where
\begin{equation} {\mathfrak g}_{\pm \alpha_j} = \{ Y \in {\mathfrak g}^c \ \mid \ [X,Y] = \pm 2\pi \sqrt{-1} \alpha_j(X)Y, \ \ \forall X \in {\mathfrak k} \}
\end{equation}
are the root spaces.

\vsv

Since $\dim(G)$ is even, we know that the abelian Lie algebra ${\mathfrak k}$ is even dimensional. So we can choose an almost complex structure on ${\mathfrak k}$ that is compatible with the metric. This means, we have a complex subspace ${\mathfrak a} \subset {\mathfrak k}^c$ such that $\langle {\mathfrak a}, {\mathfrak a} \rangle =0$, ${\mathfrak a} \cap {\mathfrak k}=0$, and ${\mathfrak a} \oplus \overline{{\mathfrak a}} = {\mathfrak k}^c$.

\vsv

Now one could simply take
\begin{equation}
{\mathfrak s} = {\mathfrak a} \oplus \sum_{j=1}^m {\mathfrak g}_{\alpha_j}
\end{equation}
to be the Samelson subalgebra. So on any even-dimensional Lie group
$G$ equipped with a bi-invariant metric, there always exists
compatible left invariant complex structures on $G$, constructed by
an arbitrary choice of an almost complex structure (compatible with
the metric) on the Cartan subalgebra plus the choice of a set of
positive roots in the root decomposition.

\vsv

Conversely, Pittie \cite{P} proved that, any left invariant complex
structure on $G$ is obtained this way. Again a nice description of
this is given  by Alexandrov and Ivanov in \S 5 of \cite{AI}, and we
also include a brief account of their argument here for readers'
convenience.

\vsv

Let ${\mathfrak s}$ be a Samelson subalgebra of ${\mathfrak g}^c$
corresponding to a left invariant complex structure $J$ on the
compact Lie group $G'$. Let
$$ {\mathfrak k } = \{ X \in {\mathfrak
g } \ \mid \  \mbox{ad}(X) ({\mathfrak s } ) \subseteq {\mathfrak s
} \} $$ be the set of all elements in ${\mathfrak g }$ that
preserves the decomposition ${\mathfrak g }^c = {\mathfrak s }
\oplus \overline{ {\mathfrak s }} $.  Then it is easy to see that
${\mathfrak k}$ is a $J$-invariant subalgebra of ${\mathfrak g}$,
hence ${\mathfrak k}$ is a complex Lie algebra. Let $K$ be a closed
connected Lie subgroup of $G'$ corresponding to ${\mathfrak k}$.
Then $K$ is a compact complex Lie group, thus a torus and is
abelian. We have an $\mbox{ad}(K)$-invariant orthogonal
decomposition $(26)$ with $\mathfrak{g}_{\alpha_j}$ given by $(25)$
and
$$ {\mathfrak a} = \{ Y \in {\mathfrak s} \  \mid \  [X,Y] =0 \ \ \forall X \in {\mathfrak k} \} .$$
It follows from the definition of ${\mathfrak k}$ that ${\mathfrak
k}^c = {\mathfrak a} \oplus \overline{{\mathfrak a}}$, so
${\mathfrak k}$ is a maximal abelian subalgebra of ${\mathfrak g}$
and $K$ is a maximal torus. So $(24)$ is satisfied, and $\pm
\alpha_1, \ldots , \pm \alpha_m$ are all the roots. Since
$[{\mathfrak s} , {\mathfrak s} ] \subseteq {\mathfrak s}$,  we know
that if $\alpha_i+\alpha_j$ is a root, then $\alpha_i+\alpha_j =
\alpha_l$ for some $l$. So we can take $\{ \alpha_1, \ldots ,
\alpha_m\}$ to be our set of positive roots. This shows that any
left invariant complex structure on a compact Lie group is
determined by a choice of a maximal torus, a choice of a complex
structure on the Lie algebra of the maximal torus, and a choice of
positive roots.

\vsv

As an application of the above characterization, let us consider
left invariant complex structures $J$ on  a simply-connected Lie
group $G$ equipped with a bi-invariant metric, in the special cases
when $\dim (G)=2n$ is small. Note that when $G={\mathbb R}^{2n}$,
the left invariant complex structures are just those identifications
of ${\mathbb R}^{2n} \cong {\mathbb C}^n$. In this case the metric
is K\"ahler, and vice versa. So for our discussion below let us
assume that $G$ is not the vector group.

\vsv

First let us start with $n=2$. In this case, $G$ has only one
choice:  $SU(2)\times {\mathbb R}$. Denote by $W$ a unit vector in
the factor ${\mathbb R}$. Note that in the Lie algebra ${\mathfrak
su}(2)$, the bracket is given by (twice of) the usual cross product,
namely, if $X$, $Y$, $Z$ forms a (positively oriented) orthonormal
basis of it, then $$ [X,Y]=2Z, \ \ [Y,Z]=2X, \ \ [Z,X]=2Y.$$
Therefore, a compatible left invariant complex structure $J$ on $G$
is determined by the choice of a unit vector $X$ in ${\mathfrak
su}(2)$, as the image of $W$ under $J$, and we must have $JY=Z$ if
$\{ X, Y, Z\}$ forms a positive orthonormal basis. Since ${\mathfrak
su}(2)\cong {\mathfrak so}(3)$, we know that such $J$ are all
isomorphic to each other. In other words, when $n=2$, the universal
cover of compact, non-K\"ahler Bismut flat surfaces is unique (up to
the change on the metric by a constant multiple): they are all
holomorphically isometric to ${\mathbb C}^2\setminus \{ 0\}$ equipped
with the metric $c \frac{\sqrt{-1}}{|z|^2} \partial
\overline{\partial } |z|^2$, where $c$ is a positive constant.

\vsv

Now let us look at the $n=3$ case. $G$ is either $SU(2)\times
{\mathbb R}^3$ or $SU(2)\times SU(2)$. Let $J$ be a compatible left
invariant complex structure on $G$. By the results of Samelson and
Pittie, we know that $JV\cap V\neq 0$ for the $V={\mathfrak su}(2)$
factor in $\mathfrak g$. So in the case when ${\mathfrak g} =
{\mathfrak su}(2) \oplus {\mathbb R}^3$, the complex structure must
be in the form: $JY=Z$, $JX=W_1$, and $JW_2=W_3$, where $\{ X, Y,
Z\}$ is an orthonormal basis of ${\mathfrak su}(2)$ and $\{ W_1,
W_2, W_3\}$ is an orthonormal basis of ${\mathbb R}^3$. This means
that $G$ is holomorphically isometric to $({\mathbb C}^2\setminus \{
0\} ) \times {\mathbb C}$.

\vsv

Similarly, when $ {\mathfrak g} = {\mathfrak su}(2) \oplus
{\mathfrak su}(2)$, we have orthonormal basis $\{ X, Y, Z\}$ for the
first factor and $\{ X_1, Y_1, Z_1\}$ for the second factor, such
that $JY=Z$, $JY_1=Z_1$, and $JX=X_1$. This is one particular complex
structure in the family of complex structures on $S^3\times S^3$
given by Calabi-Eckmann \cite{CE}, and for lack of better
terminologies, we will call it a {\em central Calabi-Eckmann threefold}
(see \S 1). Note that as Hermitian manifolds, such spaces are unique up
to the choice of two positive constants $c$, $c'$, so the metric on
the manifold is $g=(cg_0)\times (c'g_0)$, where $g_0$ is the
standard metric on $SU(2)=S^3$ with constant sectional curvature
$1$.

\vsv

For $n=4$, we have $G=SU(2)\times {\mathbb R}^5$, or $SU(2)\times
SU(2) \times {\mathbb R}^2$, or $SU(3)$. In the first case, since
$J$ has to have a non-trivial invariant part in the ${\mathfrak
su}(2)$ factor, there is only one direction in the Euclidean factor
that is $J$-involved with ${\mathfrak su}(2)$ , so $G$ is
holomorphically isometric to the product of ${\mathbb C}^2\setminus
\{ 0 \}$ with ${\mathbb C}^2$. In the case $SU(2)\times SU(2) \times
{\mathbb R}^2$, a maximal abelian subalgebra ${\mathfrak k}$ of
${\mathfrak g}$ would consist of one direction from each ${\mathfrak
su}(2)$ plus the two dimensional Euclidean factor. The choice of $J$
on ${\mathfrak k}$ may or may not respect the original splitting,
e.g., $J$ could be chosen to be
$$ JX = aY + b Z, \ \ JY = -a X - b W, \ \ JZ = -bX +aW, \ \ JW = bY - aZ, $$
where $\{ Z,W\}$ is an orthonormal basis of ${\mathbb R}^2$ and $X$,
$Y$ are unit vectors from the two ${\mathfrak su}(2)$ factors, perpendicular to the $J$-invariant part, and
$a$, $b$ are real constants satisfying $a^2+b^2=1$. Note that for
such a $J$ (when $ab\neq 0$), $G$ is not holomorphically isometric
to either the product of two copies of $ {\mathbb C}^2\setminus \{ 0
\}$, or the product of a central Calabi-Eckmann threefold and
${\mathbb C}$.

\vsv

One could apply similar analysis on $G$ in other small dimensions.
In our opinion, Samelson spaces provide an interesting class of
complex manifolds, whose differential geometric aspects could be
further studies and exploited.

\vs

\vsv

\vsv

\vsv

\section{The Bismut flat metrics}

In this section, let us assume that $(M^n,g)$ is a Hermitian manifold whose Bismut connection $\nabla^b$ is flat. We are only interested in the case when $g$ is not K\"ahler.

\vsv

By Lemma 5, locally there will always be $\nabla^b$-parallel frames. Such frames are obviously unique up to changes by constant matrices. Let us fix a $\nabla^b$-parallel, unitary local frame $e$, and denote by $\varphi$ its dual coframe. By Lemma 2, we have $\theta = - 2\gamma$, and the structure equations and the first Bianchi identity for $\nabla^c$ and $\nabla$ specialize into the following
\begin{lemma} \label{lemma 8}
On a Bismut flat Hermitian manifold $(M^n,g)$, under a local unitary $\nabla^b$-parallel frame $e$, it holds that \begin{eqnarray}
\partial \varphi & = & - \tau \ \  = \ \ ^t\!\gamma ' \wedge \varphi \\
\overline{\partial } \varphi & = & - 2 \ \overline{\gamma '} \wedge \varphi \\
\partial \gamma ' & = & -2 \gamma ' \wedge \gamma ' \\
0 & = &  \ ^t\! \gamma ' \wedge \ ^t\! \gamma ' \wedge \varphi  \\
0 & = & \overline{\partial } \ ^t\! \gamma ' \wedge \varphi - 2 \ \partial \overline{\gamma '} \wedge \varphi + 2\  \overline{\gamma '} \wedge  \ ^t\! \gamma ' \wedge \varphi + 2 \ ^t\! \gamma ' \wedge  \overline{\gamma '} \wedge  \varphi
\end{eqnarray}
\end{lemma}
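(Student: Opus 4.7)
The key observation is that in a $\nabla^b$-parallel unitary frame $e$, we have $\theta^b=0$, so Lemma 2 forces $\theta = -2\gamma = -2\gamma' - 2\gamma''$. Since $e$ is unitary, $\gamma$ is skew-Hermitian, hence $\gamma'' = -\,^t\!\overline{\gamma'}$. The plan is to plug $\theta=-2\gamma$ into the Chern structure equations (\ref{formula 1})--(\ref{formula 3}) and split by bidegree; the identities in Lemma 8 then fall out either directly or from applying $d^2=0$ to the ones already established.

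First I would derive (1) and (2) from equation (\ref{formula 1}): substituting gives $d\varphi = 2\,^t\!\gamma'\wedge\varphi + 2\,^t\!\gamma''\wedge\varphi + \tau$. Using $\,^t\!\gamma''=-\overline{\gamma'}$, the $(1,1)$ component is exactly $\overline\partial\varphi = -2\overline{\gamma'}\wedge\varphi$. For the $(2,0)$ part, one checks by a short index computation using formula (10) and the definition of $\tau$ in (9) that $\,^t\!\gamma'\wedge\varphi = -\tau$; combined with the substituted equation this yields $\partial\varphi = -\tau = \,^t\!\gamma'\wedge\varphi$. Next, (3) comes from the second structure equation $d\theta = \theta\wedge\theta + \Theta$: with $\theta=-2\gamma$ this becomes $d\gamma = -2\gamma\wedge\gamma - \tfrac{1}{2}\Theta$, and since $\Theta$ is of type $(1,1)$, extracting the $(2,0)$ component gives $\partial\gamma' = -2\gamma'\wedge\gamma'$ immediately.

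For (4) and (5) I would appeal to $d^2=0$ applied to $\varphi$. Applying $\partial$ to (1) gives $0 = \partial\,^t\!\gamma'\wedge\varphi - \,^t\!\gamma'\wedge\partial\varphi$; transposing (3) (using the matrix-form identity $\,^t\!(A\wedge B) = -\,^t\!B\wedge \,^t\!A$ for matrix-valued $1$-forms) yields $\partial\,^t\!\gamma' = 2\,^t\!\gamma'\wedge \,^t\!\gamma'$, and substituting together with $\partial\varphi=\,^t\!\gamma'\wedge\varphi$ leaves $\,^t\!\gamma'\wedge \,^t\!\gamma'\wedge\varphi = 0$, which is (4). For (5), apply $\overline\partial$ to (1) and use $\overline\partial\partial = -\partial\overline\partial$ together with $\overline\partial\varphi = -2\overline{\gamma'}\wedge\varphi$; after expanding $-\partial(-2\overline{\gamma'}\wedge\varphi)$ and moving all terms to one side, the four-term identity (5) drops out.

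The only nontrivial bookkeeping is the sign rule for transposes of wedge products of matrix-valued $1$-forms, $\,^t\!(A\wedge B) = -\,^t\!B\wedge \,^t\!A$, which is needed both to recognise $\,^t\!\gamma'\wedge\varphi = -\tau$ consistently and to transpose (3) when deriving (4); this is the main place one can slip up on signs, but it is a routine check. Once that identity is in hand, every statement is a direct type-decomposition of a known structure equation or a single application of $d^2=0$.
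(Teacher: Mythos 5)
Your proposal is correct and takes essentially the same route as the paper: the first two identities from the first structure equation with $\theta=-2\gamma$, the third from the vanishing of the $(2,0)$ part of the Chern curvature, and the last two via $d^2\varphi=0$, which is precisely the first Bianchi identity that the paper's one-line proof invokes. Your sign bookkeeping, including $\ ^t\!(A\wedge B)=-\ ^t\!B\wedge\ ^t\!A$ and $\ ^t\!\gamma''=-\overline{\gamma'}$ from skew-Hermitian symmetry of $\gamma$ in a unitary frame, checks out.
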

\begin{proof}
The first two identities are immediate from the structure equations and the fact $\theta = -2\gamma$ since $e$ is $\nabla^b$-parallel. The third one is due to the fact that the $(2,0)$ part of $\Theta$ is zero, and the last two are direct consequence of the first Bianchi identity under the circumstance.
\end{proof}
Using the expression $\gamma '_{ij}= \sum_k T_{ik}^j \varphi_k$, we can rewrite the last three identities of Lemma 8 in terms of the torsion components $T_{ij}^k$ and their covariant derivatives with respect to $\nabla^b$:
\begin{eqnarray*}
T_{ik,l}^j - T_{il,k}^j & = &  2 \sum_{r} (  \ T_{ik}^r T_{rl}^j + T_{li}^r T_{rk}^j   +  T_{kl}^r T_{ri}^j ) \\
0 & = & \sum_r (T_{ij}^r T_{rk}^l  + T_{jk}^r T_{ri}^l + T_{ki}^r T_{rj}^l )  \\
T^i_{kl,\overline{j}} + \overline{ T^k_{ij,\overline{l}} } - \overline{ T^l_{ij,\overline{k}} } & = & 2 \sum_r ( T_{lr}^i \overline{T_{jr}^k} - T_{kr}^i \overline{T_{jr}^l} - T_{lr}^j \overline{T_{ir}^k} + T_{kr}^j \overline{T_{ir}^l} - T_{kl}^r \overline{T_{ij}^r} )
\end{eqnarray*}
for any $1\leq i,j,k,l \leq n$. Note that when $i$, $j$, $k$ are not all distinct, the right hand side of the middle equality is automatically zero, so this line holds true even when $n=2$.  From the first two, we know that $T_{ik,l}^j = T_{il,k}^j$, which implies $T_{ik,l}^j=0$ for all indices, since any trilinear form which is skew-symmetric with respect to its first two positions while symmetric with respect to its last two positions must be zero, as illustrated by
 $$ C_{ij,k} = - C_{ji,k} = - C_{jk,i} = C_{kj,i} = C_{ki,j} = - C_{ik,j} = - C_{ij,k}. $$
 For the last identity, let us denote the right hand side of the equality by $A_{kl}^{ij}$. It is skew-symmetric in $ij$, namely, $A^{ij}_{kl}+A^{ji}_{kl}=0$. So the identity implies that $ T^i_{kl,\overline{j}} = - T^j_{kl,\overline{i}}$, thus its left hand side is equal to $T^i_{kl,\overline{j}} + 2 \overline{T^k_{ij,\overline{l}}}$. Also, since $\overline{A^{kl}_{ij}} = A^{ij}_{kl}$, we know that $T^i_{kl,\overline{j}}  = \overline{T^k_{ij,\overline{l}}} = \frac{1}{3} A^{ij}_{kl}$. In summary, we have the following

\begin{lemma} \label{lemma 9}
On a Bismut flat Hermitian manifold $(M^n,g)$, under a local unitary $\nabla^b$-parallel frame $e$, it holds
\begin{eqnarray}
0 & = &  T_{ik,l}^j   \\
0 & = & \sum_r (T_{ij}^r T_{rk}^l  + T_{jk}^r T_{ri}^l + T_{ki}^r T_{rj}^l )
\end{eqnarray}
\begin{eqnarray}
T^i_{kl,\overline{j}} & = & - T^j_{kl,\overline{i}} \ = \ \overline{T^k_{ij,\overline{l}}} \\
T^i_{kl,\overline{j}} & = & \frac{2}{3} \sum_r ( T_{lr}^i \overline{T_{jr}^k} - T_{kr}^i \overline{T_{jr}^l} - T_{lr}^j \overline{T_{ir}^k} + T_{kr}^j \overline{T_{ir}^l} - T_{kl}^r \overline{T_{ij}^r} ) \\
 \sum_r \eta_{r,\overline{r}} & = & \frac{2}{3} (|T|^2 - 2|\eta |^2)
\end{eqnarray}
for any $1\leq i,j,k,l \leq n$, where $r$ is summed from $1$ to $n$, and the index after the comma means covariant derivative with respect to $\nabla^b$.
\end{lemma}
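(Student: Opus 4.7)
The plan is to translate each of the five form-identities of Lemma \ref{lemma 8} into a tensor identity on the Bismut-covariant derivatives of the torsion $T^k_{ij}$, then extract the statements (i)--(v) by symmetry manipulations. The conversion uses the formula $\gamma'_{ij} = \sum_k T^j_{ik}\, \varphi_k$ from the preliminaries, together with the fact that in a $\nabla^b$-parallel unitary frame the Bismut connection matrix is identically zero, so covariant derivatives along $e_k$ and $\overline{e}_k$ agree with the directional derivatives of the components.

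I would begin with the algebraic Bianchi identity ${}^t\!\gamma' \wedge {}^t\!\gamma' \wedge \varphi = 0$; collecting the coefficient of $\varphi_i \wedge \varphi_j \wedge \varphi_k$ and antisymmetrizing yields exactly the cyclic quadratic identity (ii). Next the structure equation $\partial\gamma' = -2\gamma' \wedge \gamma'$, after equating coefficients of $\varphi_l \wedge \varphi_k$ in $\partial\gamma'_{ij}$, produces
\[ T^j_{ik,l} - T^j_{il,k} = 2\sum_r \bigl(T^r_{ik}\, T^j_{rl} + T^r_{li}\, T^j_{rk} + T^r_{kl}\, T^j_{ri}\bigr), \]
whose right-hand side vanishes by (ii). Thus $T^j_{ik,l}$ is symmetric in $(k,l)$ while being skew in $(i,k)$, and the elementary trilinear argument already displayed in the text forces $T^j_{ik,l} = 0$, giving (i).

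For (iii) and (iv), I would expand the mixed Bianchi identity (the last line of Lemma \ref{lemma 8}) into a relation of the schematic shape
\[ T^i_{kl,\bar{j}} + \overline{T^k_{ij,\bar{l}}} - \overline{T^l_{ij,\bar{k}}} = A^{ij}_{kl}, \]
where $A^{ij}_{kl} := 2\sum_r(\cdots)$ denotes the quadratic torsion expression on the right-hand side of that Bianchi identity, so that the right-hand side of (iv) equals $\tfrac{1}{3} A^{ij}_{kl}$. A direct inspection shows $A^{ij}_{kl} = -A^{ji}_{kl}$ and $\overline{A^{kl}_{ij}} = A^{ij}_{kl}$. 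Antisymmetrizing the Bianchi identity in $(i,j)$ and using $T^k_{ij} = -T^k_{ji}$ gives $T^i_{kl,\bar j} = -T^j_{kl,\bar i}$; conjugating the Bianchi identity and swapping $(i,j)\leftrightarrow(k,l)$ then gives $T^i_{kl,\bar j} = \overline{T^k_{ij,\bar l}}$. These are the two equalities of (iii). Substituting them back into the Bianchi identity collapses its left-hand side to $3\, T^i_{kl,\bar j}$, which yields (iv).

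Finally, for (v), I would contract (iv) by setting $l = i$, $j = k$ and summing over $i,k$. Since $\eta_k = \sum_i T^i_{ik}$ and $T^i_{ki,\bar k} = -T^i_{ik,\bar k}$, the left-hand side becomes $-\sum_k \eta_{k,\bar k}$. On the right-hand side, the two ``diagonal'' terms $T^i_{ir}\overline{T^k_{kr}}$ and $T^k_{kr}\overline{T^i_{ir}}$ assemble into $2|\eta|^2$ after summation, while the three remaining terms each reduce to $\pm |T|^2$ under relabeling and the skew-symmetry $T^k_{ij}=-T^k_{ji}$, combining with net coefficient $-1$. Rearranging produces $\sum_k \eta_{k,\bar k} = \tfrac{2}{3}\bigl(|T|^2 - 2|\eta|^2\bigr)$. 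The main bookkeeping obstacle throughout is the careful tracking of signs arising from the skew-symmetry of $T^k_{ij}$ in the contractions used to prove (v), but no further geometric input beyond (iv) is required.
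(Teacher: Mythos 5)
Your proposal is correct and follows essentially the same route as the paper: both translate the last three identities of Lemma \ref{lemma 8} into component form, obtain (30)--(31) via the cyclic quadratic identity and the skew/symmetric trilinear-form argument, derive (32)--(33) from the symmetries $A^{ij}_{kl}=-A^{ji}_{kl}$ and $\overline{A^{kl}_{ij}}=A^{ij}_{kl}$ of the quadratic right-hand side, and get (34) by contraction. The only (immaterial) difference is that you contract with $l=i$, $j=k$, picking up a sign on both sides, whereas the paper sets $i=k$, $j=l$.
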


Note that the last identity is obtained by letting $i=k$, $j=l$, and sum up in $(35)$.

\vsv

Write $\eta = \sum_i \eta_i \varphi_i$. By $(28)$, we have
$ \overline{\partial } \eta = - \sum_{i,j=1}^n (\eta_{i,\overline{j}} + 2 \sum_p \eta_p \overline{T^i_{jp}}) \varphi_i\wedge \overline{\varphi_j}$, so
$$  \sqrt{-1} \ \overline{\partial } \eta \wedge \omega^{n-1} = - \sum_i (\eta_{i,\overline{i}} + 2|\eta_i|^2) \frac{\omega^n}{n},$$
where $\omega$ is the K\"ahler form of the metric of $M^n$. On the other hand, by $(12)$, we have
$$ \partial \overline{\partial }\omega^{n-1} = 2 (\overline{\partial } \eta + 2 \eta \wedge \overline{\eta })\wedge \omega^{n-1},$$ thus by $(36)$ we get the following:

\begin{lemma} \label{lemma10}
On a Bismut flat manifold  $(M^n,g)$, it holds
\begin{equation}
- \sqrt{-1} \  \partial \overline{\partial }\omega^{n-1} = \frac{2}{n} (\sum_i \eta_{i,\overline{i}}) \ \omega^n = \frac{4}{3n} ( |T|^2 - 2|\eta |^2 ) \ \omega^n
\end{equation}
\end{lemma}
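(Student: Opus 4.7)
The plan is to leverage the two structure identities already at hand, namely (12) and (28), and combine them with the trace identity (36) of Lemma 9. The quantity $-\sqrt{-1}\,\partial\overline{\partial}\omega^{n-1}$ is a top degree form, and the idea is to express it in two different ways: once by differentiating (12), and once by expanding $\overline{\partial}\eta$ directly in a $\nabla^b$-parallel unitary frame, and then to match the two.

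First I would fix a local $\nabla^b$-parallel unitary frame $e$ supplied by Lemma 5, write $\eta = \sum_i \eta_i\varphi_i$, and compute $\overline{\partial}\eta$ term by term. The $\overline{\partial}\eta_i$ pieces give precisely the covariant derivatives $\eta_{i,\overline{j}}$ (no connection correction appears because $e$ is $\nabla^b$-parallel), while the $\eta_i\,\overline{\partial}\varphi_i$ pieces are handled by (28) together with the coordinate expansion $\gamma'_{ij} = \sum_k T_{ik}^j \varphi_k$. After reindexing and recognising $\sum_i \overline{T^i_{ip}} = \overline{\eta_p}$, this reproduces the formula for $\overline{\partial}\eta$ stated just before Lemma 10. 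Wedging with $\sqrt{-1}\,\omega^{n-1}$ and applying the standard pointwise identity $\sqrt{-1}\,\varphi_i\wedge\overline{\varphi_j}\wedge\omega^{n-1} = \delta_{ij}\,\omega^n/n$ for a unitary coframe then yields
\[ \sqrt{-1}\,\overline{\partial}\eta\wedge \omega^{n-1} = -\frac{1}{n}\Bigl(\sum_i \eta_{i,\overline{i}} + 2|\eta|^2\Bigr)\omega^n. \]

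For the second step I would apply $\overline{\partial}$ to (12) and use $\overline{\partial}\omega^{n-1} = -2\overline{\eta}\wedge\omega^{n-1}$ (obtained by conjugating (12)) to arrive at the relation $\partial\overline{\partial}\omega^{n-1} = 2(\overline{\partial}\eta + 2\eta\wedge\overline{\eta})\wedge \omega^{n-1}$. Multiplying by $-\sqrt{-1}$ and substituting both the preceding display and the elementary identity $\sqrt{-1}\,\eta\wedge\overline{\eta}\wedge\omega^{n-1} = |\eta|^2\omega^n/n$, the two $|\eta|^2$ contributions cancel and leave the first equality in the statement. The second equality is then immediate from formula (36) of Lemma 9.

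The only point requiring care is the exact cancellation of the two $|\eta|^2\omega^n$ terms: one coming from the torsion contribution in $\overline{\partial}\eta$, the other from the $2\eta\wedge\overline{\eta}$ factor produced by differentiating (12). No new geometric input is needed beyond Lemma 8 and the trace identity (36); the task is essentially a careful bookkeeping of signs and numerical factors, and the $\nabla^b$-parallel frame is what makes the computation short.
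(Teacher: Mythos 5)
Your proposal is correct and is essentially identical to the paper's own proof: the authors likewise expand $\overline{\partial}\eta$ in a local $\nabla^b$-parallel unitary frame using (28), obtaining $\sqrt{-1}\,\overline{\partial}\eta\wedge\omega^{n-1} = -\sum_i(\eta_{i,\overline{i}}+2|\eta_i|^2)\,\omega^n/n$, combine this with $\partial\overline{\partial}\omega^{n-1} = 2(\overline{\partial}\eta + 2\eta\wedge\overline{\eta})\wedge\omega^{n-1}$ derived from (12), and invoke the trace identity (36) for the final equality. The cancellation of the two $|\eta|^2$ contributions occurs exactly as you describe, so there is nothing to add.
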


From this identity, we immediately get that, if the Bismut flat
manifold $M$ is balanced, then $T=0$, i.e., it is K\"ahler. Also,
when $M$ is compact, the integral of the right hand side of the
above equation is zero. Note that under the frame $\{ e, \overline{e}\}$, the torsion tensor $T^c$ of the Chern connection takes the form
$$ T^c(e_i, e_j) = 2\sum_k T^k_{ij} e_k, \ \ \ \ T^c(e_i, \overline{e_j})= 0, \ \ \ \ T^c(\overline{e_i}, \overline{e_j})=2\sum_k \overline{T^k_{ij}} \overline{e_k}, $$
so $|T^c|^2= 8\sum_{i,j,k} |T^k_{ij}|^2 = 8|T|^2$, thus Theorem 2 is proved.

\vsv

Note that when $n=2$, the torsion tensor has only two components:
$T^1_{12}$ and $T^2_{12}$. The Gauduchon $1$-form has coefficients
$\eta_1 = -T^2_{12}$ and $\eta_2 = T^1_{12}$, and we always have
$|T|^2=2|\eta|^2$ when $n=2$. So $\eta_{1, \overline{1}} + \eta_{2,
\overline{2}} =0$ by $(36)$. On the other hand, by $(34)$, $\eta_{1,
\overline{1}} = - T^2_{12, \overline{1}} = T^1_{12, \overline{2}} =
\eta_{2, \overline{2}}$, so both are zero, and we get $T^i_{jk,
\overline{l}}=0$ for all indices. Hence both $T^1_{12}$ and
$T^2_{12}$ are constants. This leads to a proof of Theorem 5 in the $n=2$ case if we
follow the proof of Theorem 1 in the next page.

\vsv

When a Bismut flat manifold $(M^n,g)$ is compact, however, we will show that all the $T^i_{jk}$ (under a local Bismut parallel unitary frame) are indeed constants. The reason is due to the following simple observation that, the globally defined function $|T|^2=\sum_{i,j,k} |T^i_{jk}|^2$ on $M$ is plurisubharmonic. Note that the sum is independent of the choice of the local unitary frames, so the function is globally defined.

\begin{lemma}
On a Bismut flat manifold $(M^n,g)$, the square norm of the torsion tensor (for the Chern connection) is plurisubharmonic, and under a local unitary Bismut parallel frame $e$, it holds that
\begin{equation}
\partial \overline{\partial } |T|^2 = \sum_{i,j,k,l, m} T^i_{jk, \overline{l}} \ \overline{ T^i_{jk, \overline{m}} } \ \varphi_m \wedge \overline{\varphi_l}
\end{equation}
In particular, if $M$ is compact, then all $T^i_{jk}$ are constants.
\end{lemma}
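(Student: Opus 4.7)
The plan is to exploit the specific structure provided by a Bismut-parallel unitary frame $e$, in which the $\nabla^b$-covariant derivative of a tensor equals the ordinary derivative of its components. Such a frame exists locally by Lemma \ref{lemma 5}. The identity $T^i_{jk,l} = 0$ from Lemma \ref{lemma 9} then translates into the purely analytic statement $e_l(T^i_{jk}) = 0$ for every $l$; that is, each component $T^i_{jk}$ is antiholomorphic as a smooth function on the chart. Equivalently, $\partial T^i_{jk} = 0$ and $d T^i_{jk} = \overline{\partial} T^i_{jk} = \sum_l T^i_{jk,\overline{l}}\, \overline{\varphi}_l$; taking conjugates, $\overline{T^i_{jk}}$ is holomorphic.

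Given this, the displayed formula reduces to a short computation. Since $|T|^2 = \sum_{i,j,k} T^i_{jk}\, \overline{T^i_{jk}}$ is a globally defined real function on $M$, and since $\overline{\partial}\, \overline{T^i_{jk}} = 0$, one gets
\[ \overline{\partial}\, |T|^2 \;=\; \sum_{i,j,k} \overline{T^i_{jk}}\, \overline{\partial} T^i_{jk}. \]
Applying $\partial$ and using the Leibniz rule, the term $\overline{T^i_{jk}}\, \partial \overline{\partial} T^i_{jk}$ drops out, because for any scalar function $f$ one has $\partial \overline{\partial} f = - \overline{\partial}\partial f$, so $\partial T^i_{jk} = 0$ forces $\partial \overline{\partial} T^i_{jk} = 0$. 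What survives is exactly
\[ \partial \overline{\partial}\, |T|^2 \;=\; \sum_{i,j,k,l,m} T^i_{jk,\overline{l}}\, \overline{T^i_{jk,\overline{m}}}\, \varphi_m \wedge \overline{\varphi}_l, \]
which is the stated identity. Evaluating on $(V, \overline{V})$ for any $(1,0)$-vector $V = \sum v_l e_l$ yields
\[ \sqrt{-1}\, \partial \overline{\partial}\, |T|^2 (V, \overline{V}) \;=\; \sum_{i,j,k} \Bigl| \sum_l T^i_{jk,\overline{l}}\, \overline{v_l} \Bigr|^2 \;\geq\; 0, \]
so $|T|^2$ is plurisubharmonic.

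For the compact conclusion I would invoke Gauduchon's theorem to obtain a Gauduchon metric $\omega_G$ on $M$, so that $\partial \overline{\partial}\, \omega_G^{n-1} = 0$. A standard integration by parts then yields $\int_M \sqrt{-1}\, \partial \overline{\partial}\, |T|^2 \wedge \omega_G^{n-1} = 0$. By the plurisubharmonicity just established, the integrand is pointwise a non-negative multiple of $\omega_G^n$, hence vanishes identically. Unwinding the positivity formula forces $T^i_{jk,\overline{l}} = 0$ for all indices, and combined with $T^i_{jk,l} = 0$ from Lemma \ref{lemma 9} this gives $\nabla^b T \equiv 0$. In a $\nabla^b$-parallel frame this means each scalar function $T^i_{jk}$ has zero total derivative and hence is locally constant, as required.

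The main point---the only step requiring insight beyond routine computation---is the observation in the first paragraph: the vanishing of the Bismut covariant derivatives $T^i_{jk,l}$ in a parallel frame is not merely a tensorial identity but says that the actual \emph{functions} $T^i_{jk}$ are antiholomorphic. Once this is recognized, the formula for $\partial \overline{\partial}\, |T|^2$, its positivity, and the compactness argument all follow almost mechanically from Leibniz and a standard Gauduchon integration by parts.
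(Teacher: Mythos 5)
Your proof is correct, and it takes a genuinely cleaner route through the one nontrivial step. Both arguments share the same skeleton: fix a local unitary $\nabla^b$-parallel frame, expand $\partial \overline{\partial } |T|^2$ by the Leibniz rule, and in the compact case conclude via a Gauduchon metric and integration by parts. The difference lies in how the unwanted second-order terms are disposed of. The paper computes them head-on: using the bracket relation $[e_m, \overline{e_l}] = 2\sum_p (\overline{T^m_{lp}} e_p - T^l_{mp}\overline{e_p})$ from the proof of Lemma 6 together with $(32)$, it derives $T^i_{jk,\overline{l}m} = -2\sum_p T^l_{mp} T^i_{jk,\overline{p}}$, brings in the structure equation $\partial \overline{\varphi_p} = -2\sum_{m,l} T^l_{pm}\varphi_m \wedge \overline{\varphi_l}$ from Lemma 8, and observes that the two resulting contributions cancel because $T^l_{mp} + T^l_{pm} = 0$. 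You bypass all of this with the observation that, in a parallel frame, $(32)$ says $e_l(T^i_{jk}) = 0$ for every $l$, i.e.\ $\partial T^i_{jk} = 0$, so each component function is antiholomorphic; then $\partial \overline{\partial } T^i_{jk} = -\overline{\partial }\partial T^i_{jk} = 0$ kills the entire second-order contribution at once, with no brackets or structure equations needed. This is a genuine simplification, and it also \emph{explains} the cancellation that the paper verifies by hand. Two minor points, neither a gap: in your positivity display the factor $\sqrt{-1}$ is misplaced under your evaluation convention (the clean statement is that the coefficient matrix $a_{m\overline{l}} = \sum_{i,j,k} T^i_{jk,\overline{l}}\,\overline{T^i_{jk,\overline{m}}}$ is a Gram matrix, hence positive semidefinite, which is exactly plurisubharmonicity of $|T|^2$); and in the compact case you extract $T^i_{jk,\overline{l}} = 0$ directly from the pointwise vanishing of the integrand, whereas the paper first concludes that $|T|^2$ is constant---the two conclusions are equivalent and your route is, if anything, slightly more direct.
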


\begin{proof}
Let $e$ be a local tangent frame of type $(1,0)$ vector fields, that is unitary and $\nabla^b$-parallel. Let $\varphi$ be the coframe of $(1,0)$ forms dual to $e$. Denote by $T^i_{jk}$ the components under the frame $e$ of the torsion tensor of the Chern connection. From the proof of Lemma 6, we have
$$ [ e_m, {\overline{e_l}} ] = 2 \sum_p ( \overline{T^m_{lp}} e_p - T^l_{mp} \overline{e_p} ) .$$
So by $(32)$, we get
$$ T^i_{jk,\overline{l}m} = [e_m, \overline{e_l} ] T^i_{jk} = -2 \sum_p T^l_{mp} T^i_{jk,\overline{p}}. $$
Also, by $(28)$ in Lemma 8, we know that
$$ \partial \overline{\varphi_p} = - 2 \sum_l \gamma'_{pl} \overline{\varphi_l} = -2 \sum_{m,l} T^l_{pm} \varphi_m \wedge \overline{\varphi_l}.$$
So by $(32)$,  we have
\begin{eqnarray*}
\partial \overline{\partial } |T|^2  & = & \partial \sum T^i_{jk,\overline{l}} \overline{T^i_{jk}} \overline{\varphi_l} \\
& = & \sum T^i_{jk, \overline{l}} \overline{T^i_{jk, \overline{m}} } \ \varphi_m \wedge \overline{\varphi_l} + \sum T^i_{jk, \overline{l}m} \overline{T^i_{jk}} \ \varphi_m \wedge \overline{\varphi_l} + \sum T^i_{jk,\overline{p}} \overline{T^i_{jk}} \ \partial \overline{\varphi_p} \\
& = & \sum T^i_{jk, \overline{l}} \overline{T^i_{jk, \overline{m}} } \ \varphi_m \wedge \overline{\varphi_l} - 2 \sum ( T^l_{mp} + T^l_{pm}) T^i_{jk,\overline{p}} \overline{T^i_{jk}} \ \varphi_m \wedge \overline{\varphi_l} \\
& = & \sum T^i_{jk, \overline{l}} \overline{T^i_{jk, \overline{m}} } \ \varphi_m \wedge \overline{\varphi_l} \ \geq \ 0
\end{eqnarray*}
When $M$ is compact, using any Gauduchon metric $\tilde{\omega }$ on $M$, we know that the function $|T|^2$ has to be a constant, so $T^i_{jk, \overline{l}} =0$ for all indices, thus all $T^i_{jk}$ are constants.
\end{proof}

Now we are ready to prove Theorem 1.

\vs

\begin{proof} [\textbf{Proof of Theorem 1.}]
Let $(M^n,g)$ be a compact Bismut flat manifold. Given any $p\in M$, let $e$ be a unitary $\nabla^b$-parallel frame of $(1,0)$ tangent vectors in a neighborhood of $p$, with $\varphi$ the dual coframe. By Lemma 11, all the components $T^i_{jk}$ of the torsion tensor under $e$ are constants. Since $\nabla^b e_i=0$, we get from  $(22)$ in the proof of Lemma 2 the following
\begin{eqnarray*}
 [ e_i, e_j ]  & = &  - T^b(e_i,e_j) \ = \  2 \sum   T^k_{ij}  e_k \\
 \mbox{}  [ e_i , \overline{e_j} ]   &  =  &    - T^b ( e_i,  \overline{e_j} ) \ =  \ 2 \sum ( \overline{ T^i_{jk} } e_k -  T^j_{ik}  \overline{e_k} )
\end{eqnarray*}
It is easy to verify that
\begin{equation}
 \langle [X,Y], Z\rangle = - \langle [X,Z],Y\rangle
 \end{equation}
hold for any $X$, $Y$, $Z$ in $\{ e_1, \ldots , e_n, \overline{e}_1,
\ldots , \overline{e}_n\} $. If we write $\varphi_i =
\frac{1}{\sqrt{2}} (\phi_i + \sqrt{-1} \phi_{n+i})$, then it is
straight forward to check that $\{ \phi_i \}_{i=1}^{2n}$ form the
left invariant forms for a local Lie group, with left invariant
metric and complex structure, and by $(39)$ we see that the metric
is actually bi-invariant.

\vsv

So lifting the metric and complex structure to the universal
covering space $\widetilde{M}$ of $M$, we know that $\widetilde{M}$
is a connected, simply-connected Lie group of even (real) dimension,
equipped with a bi-invariant metric, and a compatible left invariant
complex structure. In other words, $\widetilde{M}$ is a Samelson
space.

\vsv

Let us denote by $\Gamma$ the deck transformation group. By Milnor's
Lemma, we know that $\widetilde{M}$ is isomorphic and isometric to
the product $G\times {\mathbb R}^k$, where $G$ is a simply-connected
compact semisimple Lie group, equipped with a bi-invariant metric,
and ${\mathbb R}^k$ is the vector group, with the flat Euclidean
metric. Note that for each simple factor of $G$, the bi-invariant
forms are all proportional to the Killing form, so as a Riemannian
manifold it is Einstein with positive Ricci curvature. So the
${\mathbb R}^k$  corresponds to the kernel foliation of the
Riemannian curvature tensor, the so-called nullity foliation.

\vsv

Since the elements of $\Gamma $ are isometries, they preserve the
nullity foliation and its perpendicular compliment, therefore we
know that each $\gamma$ in $\Gamma$ must be in the form $\gamma
(x,y) = (\gamma_1(x), \gamma_2(y))$ for any $(x,y)\in G\times
{\mathbb R}^k$, with $\gamma_1\in I(G)$ and $\gamma_2 \in I({\mathbb
R}^k)$ in the isometry group of the factors.

\vsv

For $i=1$, $2$, let us denote by $\pi_i: \Gamma \rightarrow
\Gamma_i$ the projection maps, with $\Gamma_i$ the image group.

\vsv

Denote by $A$ the kernel of $\pi_2: \Gamma \rightarrow \Gamma_2$.
Since $\Gamma $ has discrete orbit, and $G$ is compact, we know that
$A$ must be a finite subgroup of $G$. For any $\gamma \in \Gamma$
and any $a\in A$, we have $\pi_2(\gamma a \gamma^{-1}) =1$, so the
map $\iota_{\gamma }(a)=\gamma a\gamma^{-1}$ is an automorphism of
$A$, and we get a group homomorphism $\iota: \Gamma \rightarrow
Aut(A)$. Since $Aut(A)$ is finite, we may replace $\Gamma$ by the
kernel of $\iota$, a normal subgroup of finite index, which amounts
to replacing $M$ by a finite unbranched cover of it, in this way we
may assume that $\iota $ is trivial, that is, $A$ in contained in
the center of $\Gamma$.

\vsv

Now since both $M$ and $G$ are compact, it is easy to see that
$\Gamma_2$ acts discretely and co-compactly on ${\mathbb R}^k$. So
by Bieberbach Theorem, there exists a normal subgroup $\Gamma_2'
\subseteq \Gamma_2$ of finite index,  such that $\Gamma_2'\cong
{\mathbb Z}^k$ is a lattice. If we  replace $\Gamma$ by
$\pi_2^{-1}(\Gamma_2')$, which amounts to replacing $M$ by another
finite unbranched cover of it, we may assume that $\Gamma_2\cong
{\mathbb Z}^k$ is a lattice in ${\mathbb R}^k$. In particular,
$\Gamma_2$ is abelian. So now we have the exact sequence
$$ 1\rightarrow A \rightarrow \Gamma \rightarrow \Gamma_2 \rightarrow 1,$$
where $\Gamma_2\cong {\mathbb Z}^k$ and $A$ is a finite group
contained in the center of $\Gamma$.

\vsv

Note that the commutator group $[\Gamma , \Gamma ]$ is contained in $A$ since $\Gamma_2$ is abelian. For any $b$, $c$ in $\Gamma$, we have $bcb^{-1}=ac$ for some elements $a\in A$. From this, we know that for any positive integer $n$, $b c^n b^{-1} = (ac)^n = a^n c^n$, so
$$ b^n c^n b^{-n} = b^{n-1} (bc^nb^{-1})b^{-(n-1)} = a^n b^{n-1}c^n b^{-(n-1)} = \cdots = a^{n^2} c^n$$
Therefore, $[b^n, c^n ] = [b,c]^{n^2}$.

\vsv

Now let $\{ \gamma_1 , \ldots , \gamma_k\}$ be a subset in $\Gamma$, such that $\{ t_1, \ldots , t_k\}$ is a set of generators in $\Gamma_2 \cong {\mathbb Z}^k$, where $t_i=\pi_2(\gamma_i)$. Let $n$ be a positive integer that is a multiple of the order of $A$. Let $\Gamma_2'' \subseteq \Gamma_2$ be generated by $\{ nt_1, \ldots , nt_k\}$, and let $\Gamma'' = \pi_2^{-1} (\Gamma_2'') $. Then $\Gamma''$ is generated by the set $A\cup \{ \gamma_1^n, \ldots , \gamma_k^n\}$.
The commutators of any two elements of this union set is trivial by the above identity. So $\Gamma''$ is abelian, and there is a homomorphism from it onto its torsion part.

\vsv

In summary, we can replace the original deck transformation group $\Gamma$ by a finite sequence of successive normal subgroup of finite index,  so in the end we may assume that the map $\pi_2: \Gamma \rightarrow \Gamma_2$ is injective, and $\Gamma_2\cong {\mathbb Z}^k$ is a lattice in ${\mathbb R}^k$. By letting $\rho = \pi_1\circ \pi_2^{-1}$, we get a homomorphism from ${\mathbb Z}^k$ into $\Gamma_1 \subseteq I(G)$ such that the elements of $\Gamma \cong{\mathbb Z}^k$ take the form
$$ \gamma_t (x,y) = (\rho(t)(x), y+t), \ \  \ \ \forall \  (x,y) \in G\times {\mathbb R}^k ,$$
where $t\in {\mathbb Z}^k$. We will denote this group by $\Gamma_{\rho}$  and write $M_{\rho } = (G\times {\mathbb R}^k)/\Gamma_{\rho }$. $M_{\rho }$ is a finite unbranched cover of the original $M$ that we started with.

\vsv

To see that $M_{\rho}$ is diffeomorphic to $G\times T^k$, where $T^k={\mathbb R}^k / {\mathbb Z}^k$ is the torus, let us start from the isometry group $I(G)$ of $G$. Since $G$ is compact, $I(G)$ is a compact Lie group. Let $\{ v_1, \ldots , v_k\}$ be a set of generators of ${\mathbb Z}^k$. Then any $y\in {\mathbb R}^k$ can be uniquely written as $y=t_1v_1+ \cdots + t_kv_k$ where $t_1, \ldots , t_k$ are real numbers.  For each $\rho (v_i)$ in $I(G)$, let $\psi_t^i$, $t\in {\mathbb R}$,  be a $1$-parameter subgroup of $I(G)$, such that $\psi^i_1=\rho (v_i)$.

\vsv

Define a diffeomorphism $\Psi$ from $ G\times {\mathbb R}^k$ onto itself by letting $\Psi (x,y) = (\psi^1_{t_1} \circ \cdots \circ \psi^k_{t_k} (x), y)$,  where $y=t_1v_1+ \cdots + t_kv_k$. Then $\Psi (x, y+v_i) = \gamma_i\circ \Psi (x,y)$, where $\gamma_i = (\rho (v_i), v_i) \in \Gamma_{\rho}$. So $\Psi$ descends down to a diffeomorphism from $G\times T^k$ onto the manifold $M_{\rho}$.  This completes the proof of Theorem 1.
\end{proof}

Note that when the image of $\rho$ is finite, then we can use its kernel to be the new deck transformation group, thus reducing to the $\rho =0$ case. In this case a finite cover of $M$ becomes the compact Lie group $G \times T^k$. When the image of $\rho$ is infinite, since it is abelian, we can ignore the torsion part (again by lifting to a finite cover) and assume that ${\mathbb Z}^k$ is the direct sum of two free abelian groups, with one summand being the kernel of $\rho$, and with $\rho$ being injective on the other summand.

\vsv

For a compact Bismut flat manifold $M^n$, since the local unitary $\nabla^b$-parallel frames are unique up to changes by constant unitary matrices, we get the Bismut holonomy map which is a homomorphisms $h: \pi_1(M) \rightarrow U(n)$. When the image group of $h$ is finite, then a finite unbranched cover $M'$ of $M^n$ has a global unitary $\nabla^b$-parallel frame, thus is a compact Lie group. For this $M'$, the deck transformation group $\pi_1(M')$ is a normal subgroup of the Lie group $\widetilde{M}=G\times {\mathbb R}^k$. As in the proof of Theorem 1, by passing to a finite cover of $M'$ if necessary, we may assume that $\Gamma_2\cong {\mathbb Z}^k$ and the deck transformation group is given by $\Gamma_{\rho}$ where  $\rho : {\mathbb Z}^k \rightarrow \Gamma_1$. The normality of $\Gamma_{\rho }$ in $G\times {\mathbb R}^k$ implies that $\Gamma_1$ is in the center of $G$, thus is finite. So when the Bismut holonomy group $h(\pi_1(M) )$ is finite, the map $\rho$ has finite image, which means that $M$ is covered by $G\times T^k$. Conversely, when $M$ is covered by $G\times T^k$, then both $\rho$ and $h$ has finite image of course. To summaries, we have the following

\begin{lemma}
Let $(M^n,g)$ be a compact Bismut flat manifold. Let $\widetilde{M}=G\times {\mathbb R}^k$ be its universal cover, where $G$ is compact semisimple. Let $h: \pi_1(M)\rightarrow U(n)$ be the Bismut holonomy map, and let $\rho : {\mathbb Z}^k \rightarrow I(G)$ be the homomorphism constructed in the proof of Theorem 1, namely, a subgroup of finite index in $\pi_1(M)$ which takes the form $\Gamma_{\rho}\cong {\mathbb Z}^k$ with elements
$$ \gamma_t (x,y) = (\rho (t) (x), y+t), \ \ \forall \  (x,y) \in G\times {\mathbb R}^k, \ \ \forall \ t \in {\mathbb Z}^k. $$
Then the following are equivalent:

(1). The image of $h$ is finite.

(2). The image of $\rho $ is finite.

(3). A finite unbranched cover of $M$ is a compact Lie group.

(4). A finite unbranched cover of $M$ is $G\times T^k$, where
$T^k={\mathbb R}^k/{\mathbb Z}^k$ is the torus.
\end{lemma}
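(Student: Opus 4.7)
I would prove the cycle $(4)\Rightarrow(3)\Rightarrow(1)\Rightarrow(2)\Rightarrow(4)$, closing the loop with the immediate $(4)\Rightarrow(1)$ and $(4)\Rightarrow(2)$. The implication $(4)\Rightarrow(3)$ is immediate since $G\times T^k$ is itself a compact Lie group. For $(4)\Rightarrow(1)$ and $(4)\Rightarrow(2)$: on the cover $G\times T^k$ the deck group of the original $M$, restricted to the corresponding finite-index subgroup of $\pi_1(M)$, acts as translations on the $T^k$-factor with trivial action on $G$, so $\rho$ is trivial on this subgroup; by the Alexandrov--Ivanov lemma (Lemma 6) the left-invariant unitary $(1,0)$ frame on $G\times T^k$ is globally Bismut-parallel, so the Bismut holonomy restricted to this finite-index subgroup is trivial, and hence the full images of $\rho$ and $h$ are finite. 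The same reasoning gives $(3)\Rightarrow(1)$: if a finite cover $M'$ is a compact Lie group carrying the left-invariant Hermitian structure inherited from $\widetilde M$, Lemma 6 again produces a global Bismut-parallel frame on $M'$, forcing $h|_{\pi_1(M')}$ to be trivial.

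\textbf{The key implication $(1)\Rightarrow(2)$.} Let $K=\ker h$, a finite-index normal subgroup of $\pi_1(M)$, and let $M'$ be the corresponding unbranched cover, so that the Bismut holonomy of $M'$ is trivial. By Lemma 5 together with path-independence of parallel transport (flatness plus trivial monodromy), the local $\nabla^b$-parallel unitary $(1,0)$ frame extends to a global frame on $M'$. By Lemma 11 the torsion components $T^i_{jk}$ of this frame are globally constant, so the bracket identities worked out in the proof of Theorem 1 hold everywhere, and the deck group $\pi_1(M')$ acts on $\widetilde M=G\times\mathbb R^k$ as left translations by elements of the Lie group $\widetilde M$ itself. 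Following the finite-index reductions in the proof of Theorem 1, after passing to further finite unbranched covers we may assume the deck group has the canonical form $\Gamma_\rho=\{(\rho(t),t):t\in\mathbb Z^k\}$; the hypothesis that this cover carries a compatible Lie group structure then forces $\Gamma_\rho$ to be a discrete normal, hence central, subgroup of the connected Lie group $\widetilde M$. Since $Z(\widetilde M)=Z(G)\times\mathbb R^k$ and $Z(G)$ is finite because $G$ is compact semisimple, we conclude $\rho(\mathbb Z^k)\subseteq Z(G)$ is finite, proving $(2)$.

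\textbf{$(2)\Rightarrow(4)$ and the main obstacle.} If $\rho(\mathbb Z^k)$ is finite, then $\Lambda:=\ker\rho$ is a sublattice of $\mathbb Z^k$ of finite index, hence still a full-rank lattice in $\mathbb R^k$. The finite unbranched cover of $M_\rho$ corresponding to $\pi_2^{-1}(\Lambda)\subseteq\Gamma_\rho$ has deck group acting trivially on $G$ and by translations on $\mathbb R^k$, yielding the quotient $G\times(\mathbb R^k/\Lambda)\cong G\times T^k$, which is $(4)$. The delicate step is the passage in $(1)\Rightarrow(2)$ from ``$M'$ has a global Bismut-parallel unitary frame'' to ``after a further finite cover, $M'$ is a compact Lie group whose deck group is central in $\widetilde M$''. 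A global parallel frame by itself only exhibits $\pi_1(M')$ as a discrete subgroup of left translations in $\widetilde M$, and such a subgroup need not be central; the resolution is precisely the iterated finite-index reduction from the proof of Theorem 1, which brings the deck group into the canonical form $\Gamma_\rho$ with $\pi_2$ injective and $\Gamma_2\cong\mathbb Z^k$, at which stage normality in $\widetilde M$ becomes a genuine algebraic constraint that collapses $\rho(\mathbb Z^k)$ into the finite center $Z(G)$.
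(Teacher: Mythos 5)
Your architecture is the same as the paper's: the easy implications $(4)\Rightarrow(1),(2),(3)$ and $(3)\Rightarrow(1)$ via Lemma 6, the passage $(2)\Rightarrow(4)$ through the finite-index sublattice $\ker\rho$, and the key implication $(1)\Rightarrow(2)$ via a finite cover $M'$ with trivial Bismut holonomy, a global $\nabla^b$-parallel frame, and the resulting realization of $\pi_1(M')$ as a discrete group of left translations of $\widetilde M=G\times{\mathbb R}^k$ followed by the finite-index reductions of Theorem 1. All of that matches the paper. The gap is precisely at the step you yourself flag as delicate, and your proposed repair does not work. Putting the deck group into the canonical form $\Gamma_\rho$ with $\pi_2$ injective does nothing to make it normal in $\widetilde M$: normality would require that $M'$ be a quotient group, i.e.\ that $\widetilde M\rightarrow M'$ be a homomorphism, and that holds only if the deck group is already central --- which is the very thing to be proved. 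A quotient of $\widetilde M$ by a discrete, non-central group of left translations is merely a homogeneous space, yet it still inherits a global parallel frame, since left-invariant vector fields are preserved by all left translations. Concretely, take the isosceles Hopf surface of \S 4 with $b=\overline a$: its deck group is generated by the left translation by $\bigl(\mathrm{diag}(a/|a|,\overline a/|a|),\log|a|\bigr)$ of $SU(2)\times{\mathbb R}$. This group is already in canonical form $\Gamma_\rho$, it consists of left translations, so the left-invariant $\nabla^b$-parallel frame of Lemma 6 descends and the monodromy of the flat connection computed against parallel frames is trivial; nevertheless, when $a/|a|$ is not a root of unity, $\rho$ has infinite image and $\Gamma_\rho$ is not normal in $SU(2)\times{\mathbb R}$, since $\mathrm{diag}(a/|a|,\overline a/|a|)$ is not central in $SU(2)$. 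So ``deck group consists of left translations, in canonical form'' does not force centrality, and your chain $(1)\Rightarrow(2)$ does not close as written.

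For comparison, the paper's own proof of this lemma is terser but rests on the same assertion: from the global parallel frame it concludes that $M'$ ``thus is a compact Lie group,'' and then that $\pi_1(M')$ ``is a normal subgroup of the Lie group $\widetilde M$,'' from which centrality of $\Gamma_1$ and finiteness of $\rho$ follow. You have faithfully reproduced this route and, to your credit, explicitly isolated its weak point --- but the justification you then supply (normality ``becomes a genuine algebraic constraint'' after the reductions) is a non sequitur rather than a proof. Any complete argument for $(1)\Rightarrow(2)$ must explain why trivial Bismut holonomy on a finite cover should force the $G$-components of the deck transformations into the finite center $Z(G)$, i.e.\ must rule out non-central left-translation deck groups of the kind exhibited above; neither your write-up nor the paragraph in the paper supplies such an argument.
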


\vsv

To illustrate the role of the deck transformation groups, let us
examine the isosceles  Hopf surface case. In this case, the
universal cover is the space ${\mathbb C}^2\setminus \{ 0\} = SU(2)
\times {\mathbb R}$, where the identification map is $ \phi (z) =
(A_z, \log |z|)$. Here $z=(z_1,z_2)$, $|z|^2=|z_1|^2+|z_2|^2$, and
$$ A_z = \frac{1}{|z|} \left[ \begin{array}{cc} z_1, & - \overline{z_2} \\ z_2, & \overline{z_1} \end{array} \right] \in SU(2).$$
The deck transformation group $\Gamma$ is a finite extension (by
unitary rotations) of the infinite cyclic group ${\mathbb Z}f$ where
$f(z_1, z_2) =(az_1, bz_2)$, with $0<|a|=|b|<1$. On $SU(2) \times
{\mathbb R}$, the action of the generator is $\gamma (A_z,y) = (\rho
(f) (A_z), \log |a| +y )$ where
$$ \rho(f) (A_z) = \frac{1}{ |a|\cdot  |z|}  \left[ \begin{array}{cc} az_1, & - \overline{bz_2} \\ bz_2, & \overline{az_1} \end{array} \right]  .$$
Note that $\rho(f)$ is always in the isometry group $I(G)$ of
$G=SU(2)$, but it will be in $G$ (as left multiplications) if and
only if $b=\overline{a}$. So in general, the image of $\rho$ is not
contained in $G$ itself. Also, the image of $\rho$ (or equivalently
the image of the holonomy map $h$) is finite if and only if both
$\frac{a}{|a|}$ and $\frac{b}{|a|}$ are roots of unity. So for a
generic choice of $|a|=|b|$, the primary isosceles Hopf surface
$({\mathbb C}^2\setminus \{ 0\} )/ {\mathbb Z}f$ does not have
finite Bismut holonomy, and the image of $\rho$ are not all left
multiplications of $G$.

\vsv

Next, let us give an example of a compact Bismut-flat threefold in Corollary 4, whose universal cover is $SU(2)\times {\mathbb R}^3= ({\mathbb C}^2\setminus \{ 0\} )\times  {\mathbb C}$, but none of the finite unbranched covers of $M$ can be the product of a Hopf surface and an elliptic curve.

\vsv

Let us consider the homomorphism $\rho : {\mathbb Z}^3 \rightarrow SU(2)$ defined by
$$ \rho (1,0,0)=A, \ \ \rho (0,1,0)=\cos \alpha I + \sin \alpha A, \ \ \rho (0,0,1) = \cos \beta I + \sin \beta A ,$$
where $\alpha$, $\beta$ are real numbers and
$$ A = \frac{1}{\sqrt{2}} \left[ \begin{array}{cc} i & 1 \\ -1 & -i \end{array} \right] $$
Note that $A^2=-I$, and $\rho (0,n,m) = \cos (n\alpha +m\beta ) I + \sin (n\alpha +m\beta ) A$ for any integer $n$ and $m$.  Let us take the values of $\alpha$ and $\beta$ so that $n\alpha + m\beta$ is not a rational multiple of $\pi$ for any $n$, $m\in {\mathbb Z}$. This would be the case if we take  $\alpha =\sqrt{2}\pi$ and $\beta =\sqrt{3}\pi$ for instance.

\vsv

Let us now consider the group $\Gamma_{\rho }\cong {\mathbb Z}^3$ which acts on $SU(2)\times {\mathbb R}^3$ by
$\gamma_t (x,y) = (\rho (t)(x), y+t)$ for any $(x,y)\in SU(2)\times {\mathbb R}^3$, where $t\in {\mathbb Z}^3$. Let $M^3_{\rho } = (SU(2)\times {\mathbb R}^3)/\Gamma_{\rho }$. Since both the metric and the complex structure on $SU(2)\times {\mathbb R}^3$ are left invariant, the elements of $\Gamma_{\rho }$ are holomorphic isometries, so $M_{\rho }$ is a compact Bismut flat threefold, and it is diffeomorphic to $SU(2)\times T^3$. However, for any subgroup $\Gamma ' \subseteq \Gamma_{\rho }$ with finite index, the (free part of the) abelian group $\rho (\Gamma ')$ still has rank $2$. Thus any finite unbranched cover of $M_{\rho }$ cannot be the product of a Hopf surface and an elliptic curve.

\vs

\vsv

\vsv

\vsv

\section{The non-compact case}

In this section, let us discuss non-compact Hermitian manifolds that are Bismut flat. It turns out that the compactness assumption in Theorem 1 can be dropped, thanks to a nice property about flat metric connections with skew-symmetric torsion on a Riemannian manifold, given by Agricola and Friedrich (\cite{AF}, Prop. 2.1). The result states that on a Riemannian manifold, if a metric connection $\nabla ''$ with skew-symmetric torsion is flat, then the torsion of $\nabla ''$ is parallel with respect to another metric connection $\nabla '= \frac{2}{3} \nabla + \frac{1}{3}\nabla ''$. In our notation, apply this result to the flat connection $\nabla^b$, we get the following:

\begin{lemma} {\bf (Agricola-Friedrich)} \label{lemma 13}
Let $(M^n,g)$ be a Hermitian manifold with flat Bismut connection $\nabla^b$. Then the torsion tensor $T^b$ of $\nabla^b$ is parallel with respect to the metric connection $\nabla ' = \frac{2}{3} \nabla + \frac{1}{3}\nabla^b$, where $\nabla$ is the Riemannian (Levi-Civita) connection.
\end{lemma}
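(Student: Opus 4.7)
The lemma is the Hermitian specialization of Proposition 2.1 of \cite{AF}, which states that on any Riemannian manifold the torsion of a flat metric connection with totally skew-symmetric torsion is parallel under the averaged connection $\tfrac{2}{3}\nabla + \tfrac{1}{3}\bar\nabla$. The Bismut connection $\nabla^b$ has by construction a totally skew $(3,0)$-torsion and is metric, and we are assuming $R^b=0$, so the hypotheses are in place and the fastest proof is simply to cite \cite{AF}. If I wanted to supply a self-contained argument, the plan is as follows.

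First I would reduce the statement to an algebraic identity. Writing both $\nabla^b - \nabla = \tfrac{1}{2}T^b$ and $\nabla' - \nabla = \tfrac{1}{6}T^b$ as vector-valued $2$-tensors, a direct Leibniz-rule computation gives
\begin{equation*}
(\nabla'_X T^b)(Y,Z) \;=\; (\nabla^b_X T^b)(Y,Z) \;-\; \tfrac{1}{3}\,J(X,Y,Z),
\end{equation*}
where $J(X,Y,Z) := T^b(X, T^b(Y,Z)) - T^b(T^b(X,Y), Z) - T^b(Y, T^b(X,Z))$. So $\nabla' T^b = 0$ is equivalent to the pointwise identity $3\,\nabla^b T^b = J$, and the skew-symmetry of $T^b(\cdot,\cdot)$ in its two arguments already forces $J$ to be totally antisymmetric in $(X,Y,Z)$. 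To establish $3\,\nabla^b T^b = J$ I would then feed $R^b=0$ in through two channels: (a) the first Bianchi identity for a connection with torsion, which specializes to the cyclic-sum identity
\begin{equation*}
\sum_{\mathrm{cyc}(X,Y,Z)}\!\bigl\{(\nabla^b_X T^b)(Y,Z) + T^b(T^b(X,Y),Z)\bigr\} \;=\; 0;
\end{equation*}
and (b) the pair-symmetry defect formula for a metric connection with skew torsion, which expresses $R^b(X,Y,Z,W) - R^b(Z,W,X,Y)$ as a specific combination of $dT^b$ and a quadratic four-form $\sigma^{T^b}$, and with $R^b=0$ collapses to the four-form identity $dT^b = 2\sigma^{T^b}$.

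Combining (a) and (b) after pairing with a fourth vector $W$ via the metric, I would show that the four-tensor $\langle(\nabla^b_X T^b)(Y,Z),W\rangle$ is antisymmetric not only in the last three slots (which is automatic, since $T^b$ is totally skew as a $3$-form) but also in the first two, hence is totally antisymmetric in all four arguments. Total antisymmetry promotes any cyclic sum over three of its slots to three times the tensor itself, and so the Bianchi relation (a) upgrades at once from the cyclic form to the pointwise identity $3\,\nabla^b T^b = J$. The hardest step is the second channel (b) together with this upgrade: the combinatorial bookkeeping of the pair-symmetry defect and the quadratic torsion form $\sigma^{T^b}$ is where essentially all the calculation lives. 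A more Hermitian alternative sidestepping (b) is available in our setting, since Lemma \ref{lemma 9}, proved without any compactness assumption, already records in a $\nabla^b$-parallel unitary frame both the vanishing $T^i_{jk,l}=0$ and the explicit formula expressing $T^i_{kl,\bar j}$ as a $T\bar T$-quadratic combination of torsion components; matching these data componentwise against the $(1,0)$ and $(0,1)$ parts of $3\,\nabla^b T^b = J$ verifies the identity by direct calculation.
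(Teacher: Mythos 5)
Your proposal is correct and takes essentially the same route as the paper: the paper's entire ``proof'' consists of citing Proposition 2.1 of \cite{AF} applied to $\nabla^b$, followed by the one-line remark that formulae (32)--(35) of Lemma \ref{lemma 9} allow one to check $\nabla' T^b=0$ directly, which is exactly the citation-plus-direct-verification structure you give. Your additional sketch of the internal Agricola--Friedrich argument (Bianchi identity plus the pair-symmetry defect to show $\langle (\nabla^b_X T^b)(Y,Z),W\rangle$ is a $4$-form, upgrading the cyclic identity to $3\,\nabla^b T^b = J$) goes beyond what the paper records but is consistent with it.
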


By using formulae (32)-(35) in Lemma 9, one can also check directly that $\nabla ' T^b =0$. Now we are ready to prove Theorem 5.

\vs

\begin{proof} [\textbf{Proof of Theorem 5.}] Let $(M^n,g)$ be a simply-connected Bismut flat manifold. Let $e$ be a local unitary $\nabla^b$-parallel frame.  Under such an $e$, we have
\begin{eqnarray*}
T^b(e_i, e_j ) & = & -2\sum_k T_{ij}^k e_k \\
T^b(\overline{e_i}, e_j) & =  & 2\sum_k ( \overline{T_{ik}^j} e_k - T^i_{jk} \overline{e_k} )
\end{eqnarray*}
So the square norm $|T^b|^2 = 24\sum_{i,j,k} |T^k_{ij}|^2 = 24
|T|^2$. Since $T^b$ is $\nabla'$-parallel by Lemma 13, we know that
the square norm $|T^b|^2$ is a constant on $M$. Thus the left hand
side of formula (38) in Lemma 11 is identically zero, which implies
that $T^i_{kl,\overline{j}} =0$ for any indices. So under any local
unitary $\nabla^b$-parallel frame $e$, the components $T^k_{ij}$ of
the torsion of the Chern connection are all constants. By the proof
of Theorem 1, we know that $M$ is an open subset of a Samelson
space, and Theorem 5 is proved.
\end{proof}

\vs

Theorem 5 suggests that the flatness of the Bismut connection is perhaps more restrictive than the flatness of some other metric connections on a Hermitian manifold. For instance, in \cite{Boothby}, Boothby pointed out that in the non-compact case, a Chern flat metric doesn't have to have parallel Chern torsion, even in complex dimension $2$. Below let us give another example in complex dimension $2$.

\vsv

Let $\Omega \subseteq {\mathbb C}^2$ be a domain, and $f$, $h$ be holomorphic functions on $\Omega$, and $(z_1, z_2)$ be the standard coordinates of ${\mathbb C}^2$. Consider the Hermitian metric $g$ on $\Omega$ given by
$$\omega_g = \sqrt{-1} (e^{f+\overline{f}}dz_1\wedge d\overline{z}_1 +  e^{h+\overline{h}}dz_2\wedge d\overline{z}_2). $$
The Chern connection of $(\Omega , g)$ is flat, since it has a holomorphic unitary frame $e_1=e^{-f}\frac{\partial }{\partial z_1}$ and $e_2=e^{-h}\frac{\partial }{\partial z_2}$. It is K\"ahler if and only if both $\frac{\partial f}{\partial z_2}$ and $\frac{\partial h}{\partial z_1}$ are identically zero, and the components of the torsion of the Chern connection under the frame $e$ are
$$ T^1_{12} = -\frac{1}{2} \ \frac{\partial f}{\partial z_2} \ e^{-h} , \ \ \ \ \ T^2_{12} = \frac{1}{2} \ \frac{\partial h}{\partial z_1} \ e^{-f} .$$
For generic choices of $f$ and $h$, clearly the Chern torsion does not have constant norm, thus can not be parallel under any metric connection.

\vsv

There are also complete examples of this kind. For instance, consider the Chern flat Hermitian metric $g$ on ${\mathbb C}^2$ given by
$$ \omega_g = \sqrt{-1} ( \varphi_1\wedge \overline{\varphi_1} + \varphi_2 \wedge \overline{\varphi_2} ) , $$
where $\varphi_1=dx$, $\varphi_2=dy-2xydx$, and $(x,y)$ is the standard coordinate of ${\mathbb C}^2$. The torsion components under the unitary coframe $\varphi$ are $T^1_{12}=0$, $T^2_{12}=x$. So the norm of the Chern torsion $|T^c|^2= 16|x|^2$ is not a constant.

\vsv

To see that $g$ is complete, let $\sigma : [0,\infty ) \rightarrow {\mathbb C}^2$ be a smooth curve that goes to infinity. Write $\sigma (t) = (x(t), y(t))$.  Its length under $g$ is
$$ L_g(\sigma )= \int_0^{\infty } \sqrt{ |x'|^2 + |y'-2xyx'|^2 } dt . $$
Assume that $L_g(\sigma ) <\infty$. Then $\int |x'|dt < \infty $, so $|x(t)|< C$ for some constant $C$. Let $z(t)=e^{- x^2(t)}y(t)$, then
$$ | z' | = |e^{- x^2} (y' - 2xy x')| \leq e^{C^2} |y'-2xyx'|, $$
whose integral over $[0,\infty )$ is finite. So $z(t)$ stays bounded, which implies that $y(t)$ also stays bounded, as $|y|\leq e^{C^2}|z|$. But this is impossible as $(x(t), y(t))$ needs to go to infinity when $t\rightarrow \infty$. So $L_g(\sigma )$ must be $\infty$, and this shows the completeness of the metric $g$.

\vs

For Hermitian surfaces $(M^2,g)$ with flat Riemannian connection, there are also lots of non-compact examples, but there are no complete ones. In fact, if
we assume that $(M^2,g)$ is a complete Hermitian manifold with flat
Riemannian connection, then its universal cover $\widetilde{M}$ as a
Riemannian manifold is just the flat Euclidean space ${\mathbb
R}^4$. In \cite{SV} (Theorem 1.3), Salamon and Viaclovsky showed
that any orthogonal complex structure on ${\mathbb R}^4$ (or
${\mathbb R}^4$  deleting a subset with zero $1$-dimensional
Hausdorff measure) must be the standard one, namely,
$\widetilde{M}$ is holomorphically isometric to the flat ${\mathbb
C}^2$. In contrast, Borisov, Salamon, and
Viaclovsky in \cite{BSV} were able to construct infinitely many
nonstandard orthogonal complex structures on the Euclidean space $\mathbb{R}^6$.

\vsv

It is well-known that Hermitian surfaces $(M^2,g)$ with flat Riemannian connection correspond to holomorphic maps from $M^2$ into the space $Z$ of all almost complex structures on $\mathbb{R}^4$ compatible with the Euclidean metric and (a fixed) orientation. However, it is not necessarily easy to write down   such metrics explicitly in terms of the complex Euclidean coordinate $(z_1, z_2)$. Here we observe that such surfaces are locally determined by three holomorphic functions, and using this characterization, we can write down lots of explicit examples of such metrics.

\vsv

Let $U$ be a complex manifold of complex dimension $2$, and $u$, $v$, $f$ are holomorphic functions in $U$. Let us denote by
\begin{eqnarray}
\varphi_1 & = & \frac{1}{ \sqrt{2} \sqrt{\lambda } }  du +   \frac{1}{ \sqrt{2} \lambda \sqrt{\lambda }}  (i\overline{v}-u\overline{f})df \\
\varphi_2 & = & \frac{1}{ \sqrt{2} \sqrt{\lambda } }  dv -  \frac{1}{ \sqrt{2} \lambda \sqrt{\lambda }}  (i\overline{u}+v\overline{f})df
\end{eqnarray}
where $\lambda = 1+ |f|^2$. We have the following:

\begin{lemma}
Let $(M^2,g)$ be a Hermitian surface of flat Riemannian connection. Then for any $p\in M$, there exists a neighborhood $p\in U\subseteq M$ and three holomorphic functions $u$, $v$, $f$ in $U$, such that $\{ \varphi_1, \varphi_2\} $ given in $(40)$, $(41)$ forms a unitary coframe in $U$. Conversely, given any three holomorphic functions $u$, $v$, $f$ in a complex surface $U$ such that $\varphi_1\wedge \varphi_2$ is nowhere zero, the Hermitian metric $g$ using $\varphi$ as unitary coframe has flat Riemannian connection.
\end{lemma}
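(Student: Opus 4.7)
The plan is to give explicit coordinates converting the data of a Riemannian-flat Hermitian surface into three holomorphic functions and vice versa, through the following normal form: $u, v$ come from the flat complex coordinates of the Riemannian metric and from the twistor parameter $f$ of the compatible complex structure. I would show that, up to a constant unitary rotation of the coframe, $(40)$--$(41)$ correspond to choosing standard coordinates $Z_1, Z_2$ on an open set $V\subseteq\mathbb{C}^2$ carrying the flat metric, a function $f:V\to\mathbb{C}$ giving an affine chart of the twistor fibre of the compatible orthogonal complex structure, and then setting
\[
u=Z_1+f\,\overline{Z}_2,\qquad v=-i(Z_2-f\,\overline{Z}_1).
\]

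For the forward direction, fix $p\in M$. Since $(M^2,g)$ has flat Levi-Civita connection, there is a neighborhood $U$ of $p$ and a local isometry $\phi:U\to V\subseteq\mathbb{C}^2=\mathbb{R}^4$ with the flat metric. Transporting $J$ along $\phi$ yields an orthogonal almost complex structure $J'$ on $V$ which, in an affine chart of the $\mathbb{CP}^1$-fibre, has type $(1,0)$ forms spanned by
\[
\omega_1=dZ_1+f\,d\overline{Z}_2,\qquad \omega_2=dZ_2-f\,d\overline{Z}_1
\]
for a smooth function $f:V\to\mathbb{C}$. A direct Newlander--Nirenberg calculation shows $J'$ is integrable if and only if $f_{\overline{Z}_1}=-f\,f_{Z_2}$ and $f_{\overline{Z}_2}=f\,f_{Z_1}$; equivalently, $f$ is $J'$-holomorphic. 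Using this PDE, I would check that $u$ and $v$ defined above are also $J'$-holomorphic, and then verify the algebraic identities
\[
du+\tfrac{i\overline{v}-u\overline{f}}{\lambda}\,df=\omega_1,\qquad dv-\tfrac{i\overline{u}+v\overline{f}}{\lambda}\,df=-i\,\omega_2.
\]
Since $|\omega_j|_g^2=2\lambda$ and $\omega_1$ is $g$-orthogonal to $\omega_2$, dividing through by $\sqrt{2\lambda}$ recovers $(40)$--$(41)$ and shows $\{\varphi_1,\varphi_2\}$ is a unitary coframe in a neighborhood of $p$.

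For the converse, suppose $u,v,f$ are holomorphic on $U$ with $\varphi_1\wedge\varphi_2$ nowhere zero. Algebraically inverting the ansatz above, I would define
\[
Z_1=\frac{u+if\,\overline{v}}{\lambda},\qquad Z_2=\frac{iv+f\,\overline{u}}{\lambda},
\]
and by direct differentiation (using $d\lambda=\overline{f}\,df+f\,d\overline{f}$ and $1+|f|^2=\lambda$) verify
\[
dZ_1+f\,d\overline{Z}_2=\sqrt{2\lambda}\,\varphi_1,\qquad dZ_2-f\,d\overline{Z}_1=-i\sqrt{2\lambda}\,\varphi_2.
\]
Solving these for $dZ_i,d\overline{Z}_i$ in terms of $\varphi_i,\overline{\varphi}_i$ and summing, a short calculation shows that the pullback metric $Z^{*}(|dZ_1|^2+|dZ_2|^2)$ equals the Hermitian metric $g$ having $\varphi$ as its unitary coframe. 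Finally, in the basis $\{\omega_1,\omega_2,\overline{\omega}_1,\overline{\omega}_2\}$, the transformation matrix to $\{dZ_1,dZ_2,d\overline{Z}_1,d\overline{Z}_2\}$ has determinant $\lambda^{-2}\ne 0$, so $Z:=(Z_1,Z_2):U\to\mathbb{C}^2$ is a local diffeomorphism wherever $\varphi_1\wedge\varphi_2\neq0$. Hence $(U,g)$ is locally isometric to flat $\mathbb{C}^2$, and $g$ has flat Levi-Civita connection.

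The main obstacle will be the explicit verification of $dZ_1+f\,d\overline{Z}_2=\sqrt{2\lambda}\,\varphi_1$ (and its analogue for $\varphi_2$) in the converse. Differentiating the ratios defining $Z_1$ and $\overline{Z}_2$ produces a sum of many terms with coefficients of the form $1/\lambda$ and $1/\lambda^2$, involving $du$, $d\overline{v}$, $df$, and $d\overline{f}$, and the $d\overline{v}$ and $d\overline{f}$ contributions must cancel exactly after using $1+|f|^2=\lambda$, leaving the desired $du$-coefficient equal to $1$ and $df$-coefficient equal to $(i\overline{v}-u\overline{f})/\lambda$. The task is purely bookkeeping, but the powers of $\lambda$, the signs, and the factors of $i$ all have to line up on the nose, which is precisely what motivates the asymmetric normalization constants appearing in $(40)$ and $(41)$.
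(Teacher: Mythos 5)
Your proposal is correct, and in the forward direction it is essentially the paper's own argument in different notation: the paper likewise combines flat coordinates with an affine-chart parametrization of the twistor fibre $Z\cong{\mathbb P}^1$, arriving at $\varphi_1\propto f\,d\overline{t}_1+i\,dt_2$, $\varphi_2\propto f\,d\overline{t}_2-i\,dt_1$, which is exactly your $\omega_1$ and $-i\,\omega_2$ after the constant relabeling $Z_1=it_2$, $Z_2=t_1$; under that relabeling your $u=Z_1+f\overline{Z}_2$ and $v=-i(Z_2-f\overline{Z}_1)$ coincide with the paper's $u=f\overline{t}_1+it_2$ and $v=f\overline{t}_2-it_1$. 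Two stylistic differences there: the paper produces its chart from an explicit orthogonal matrix $P$ conjugating $J_z$ to $J_0$, and it gets holomorphicity of $u,v$ for free from the observation that $\varphi_1,\varphi_2$ have vanishing $(0,1)$-parts, whereas you route through the Newlander--Nirenberg PDE $f_{\overline{Z}_1}=-f f_{Z_2}$, $f_{\overline{Z}_2}=f f_{Z_1}$ (which is the correct integrability condition; the paper simply cites integrability $\Leftrightarrow$ holomorphicity of $f$ as well known). Where you genuinely diverge is the converse. The paper inverts to define $t_1,t_2$ just as you define $Z_1,Z_2$, but then stays intrinsic: it reads off the Levi-Civita connection matrices $\theta_1=\alpha I$, $\theta_2=\beta E$ with $\alpha=\frac{1}{2\lambda}(f\,d\overline{f}-\overline{f}\,df)$, $\beta=-i\,df/\lambda$, and checks $d\alpha=\beta\overline{\beta}$, $d\beta=2\beta\alpha$, hence $\Theta_1=\Theta_2=0$. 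You instead prove that $Z=(Z_1,Z_2)$ is a local diffeomorphism (your determinant $\lambda^{-2}$ is correct) pulling the Euclidean metric back to $g$; this buys an explicit local isometry to flat ${\mathbb C}^2$ and makes flatness manifest with no curvature computation, at the price of the heavier bookkeeping you flagged --- which does close, since the relations $u=Z_1+f\overline{Z}_2$ and $v=-i(Z_2-f\overline{Z}_1)$ hold identically for your $Z_1,Z_2$, and differentiating them reproduces $(40)$--$(41)$ exactly as in your forward computation.

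Two minor repairs. First, in your converse display the constant must be $dZ_2-f\,d\overline{Z}_1=+i\sqrt{2\lambda}\,\varphi_2$: your own forward identity $dv-\lambda^{-1}(i\overline{u}+v\overline{f})\,df=-i\,\omega_2$ combined with $(41)$ gives $\omega_2=i\sqrt{2\lambda}\,\varphi_2$, not $-i\sqrt{2\lambda}\,\varphi_2$; this is a harmless unimodular slip that affects neither the norm computation nor flatness. Second, in the forward direction your affine chart fails if $J'(p)$ is the point at infinity of the fibre; you should note that one first composes the isometry $\phi$ with a constant rotation of ${\mathbb R}^4$, which is legitimate since $SO(4)$ acts transitively on the twistor fibre --- a point the paper's proof also leaves implicit in its matrix $J_z$, $z\in{\mathbb C}\cup\{\infty\}$.
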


\begin{proof}
Let $(x_1, \ldots , x_4)$ be the standard coordinate of ${\mathbb R}^4$ and write $\epsilon = \ ^t\! ( \frac{\partial}{\partial x_1}, \ldots , \frac{\partial}{\partial x_4} )$.  Under the natural frame $\epsilon$, the elements $J$ in $Z$ are represented by matrices
\begin{equation}
 J_z=\left[ \begin{array} {cc} aE & bE-cI \\ bE+cI & -aE \end{array} \right] , \ \ \ \ \mbox{where} \ \ E= \left[ \begin{array} {cc} 0 & 1 \\ -1 & 0 \end{array} \right] ,
 \end{equation}
$z=x+iy \in {\mathbb C}\cup \{ \infty \}$, and  $(a,b,c)=(\frac{2x}{|z|^2+1}, \frac{2y}{|z|^2+1}, \frac{|z|^2-1}{|z|^2+1})$, which  identifies $Z\cong S^2$ with ${\mathbb P}^1={\mathbb C}\cup \{ \infty \} $. Namely, we have $J(\epsilon ) = J_z \epsilon$.

\vsv

In order to get an explicit expression of a local unitary frame, we look for a local  orthonormal frame $\tilde{\epsilon}=P\epsilon$ such that $J(\tilde{\epsilon }) = J_0\tilde{\epsilon}$, or equivalently, $P^{-1}J_0P =J_z$. While $P$ is highly non-unique, the following symmetric matrix
\begin{equation*}
 P= \frac{1}{  \sqrt{|z|^2+1} }  \left[ \begin{array} {cc} xI & yI-E \\ yI+E & -xI \end{array} \right]
 \end{equation*}
is clearly orthogonal and satisfies the condition $P^{-1}J_0P =J_z$.

\vsv

Write
$$ \epsilon' = \left[ \begin{array} {c} \epsilon_1 \\ \epsilon_2 \end{array} \right] , \ \ \epsilon'' = \left[ \begin{array} {c} \epsilon_3 \\ \epsilon_4 \end{array} \right] , \ \  \tilde{\epsilon}' = \left[ \begin{array} {c} \tilde{\epsilon}_1 \\ \tilde{\epsilon}_2 \end{array} \right] , \ \ \tilde{\epsilon}'' = \left[ \begin{array} {c} \tilde{\epsilon}_3 \\ \tilde{\epsilon}_4 \end{array} \right] , $$
then we have
$$ \tilde{\epsilon}' = \frac{1}{  \sqrt{|z|^2+1} } (x\epsilon' +(yI-E)\epsilon''), \ \ \ \tilde{\epsilon}'' = \frac{1}{  \sqrt{|z|^2+1} } ((yI+E)\epsilon' -x \epsilon''). $$
From this, we can form a local unitary frame  $e=\ ^t\!(e_1, e_2)$ by
$$e = \frac{1}{ \sqrt{2} } (\tilde{\epsilon}' - i \tilde{\epsilon}'' ) =  \frac{1}{ \sqrt{2} \sqrt{|z|^2+1} } \{   ( \overline{z}I-iE) \epsilon' + (i\overline{z} I - E) \epsilon'' \}  $$
Let $\{ \varphi_1, \varphi_2\}$ be the local unitary coframe on $M$ dual to $e$, then we have
\begin{equation}
 \left[ \begin{array}{ll} \varphi_1 \\ \varphi_2 \end{array} \right] = \frac{1}{ \sqrt{2} \sqrt{|z|^2+1} } \{ (zI+iE) \left[ \begin{array}{ll} dx_1 \\ dx_2 \end{array} \right] - (izI+E) \left[ \begin{array}{ll} dx_3 \\ dx_4 \end{array} \right] \} .
 \end{equation}
 Now if $(M^2,g)$ is a Hermitian surface with flat Riemannian connection. Fix any $p\in M$, we can choose a small neighborhood $U$ of $p$ and a local coordinate $(x_1, \ldots , x_4)$ centered at $p$, such that the natural frame $\epsilon = \frac{\partial }{\partial x}$ is orthonormal and parallel under the Riemannian connection. The complex structure on $M$ gives a smooth map $f$ from $U$ into $Z\cong {\mathbb P}^1$, such that the almost complex structure of $M$ at $q\in U$ corresponds to $J_{f(q)} \in Z$. As is well-known, the integrability of $J$ is equivalent to the holomorphicity of $f$.

\vsv

From the formula $(43)$ above, we get a local unitary coframe $\varphi$ in $U$:
\begin{equation}
 \left[ \begin{array}{ll} \varphi_1 \\ \varphi_2 \end{array} \right] = \frac{1}{ \sqrt{2} \sqrt{|f|^2+1} } \{ (fI+iE) \left[ \begin{array}{ll} dx_1 \\ dx_2 \end{array} \right] - (ifI+E) \left[ \begin{array}{ll} dx_3 \\ dx_4 \end{array} \right] \}  .
 \end{equation}
Write $\lambda = 1+|f|^2$, $t_1 = x_1+ix_3$, $t_2=x_2+ix_4$, then the above formula can be rewritten as
\begin{eqnarray}
\varphi_1 & = & \frac{1}{ \sqrt{2} \sqrt{\lambda } } (fd\overline{t}_1 +idt_2) \\
\varphi_2 & = & \frac{1}{ \sqrt{2} \sqrt{\lambda } }  (fd\overline{t}_2 -idt_1)
\end{eqnarray}
Since the $(0,1)$-components of $\varphi_1$, $\varphi_2$ are zero, we know that $u=f\overline{t}_1 +it_2$ and $v=f\overline{t}_2-it_1$ are both holomorphic functions. Expressing $\overline{t}_1$, $t_2$ in terms of $u$ and $\overline{v}$, we get
\begin{equation}
 t_1 = \frac{1}{\lambda } (f\overline{u} + iv) , \ \ \ \  t_2 = \frac{1}{\lambda } (f\overline{v} -iu).
 \end{equation}
Plugging them into (45) and (46), we get the expressions (40) and
(41). This proved the first part of the lemma.

\vsv

Conversely, if we start with three holomorphic functions $u$, $v$, $f$ in
$U$ with $\varphi_1\wedge \varphi_2$ nowhere zero, then the Hermitian
metric $g$ with metric form
$$ \omega = i (\varphi_1 \wedge \overline{\varphi}_1 +
\varphi_2 \wedge \overline{\varphi}_2 ) $$ will have flat Riemannian
connection. This is because if we define $t_1$ and $t_2$ by (47),
and let $x_1$ and $x_3$ (respectively $x_2$ and $x_4$) be the real
and imaginary parts of $t_1$ or $t_2$, then the formula (40) and
(41) becomes (45) and (46), and then $(44)$. From this, it is easy
to compute that
 the matrices of the Riemannian connection are
 $ \theta_1 =  \alpha I $ and $\theta_2 =  \beta E$, where
$$ \alpha = \frac{1}{2\lambda } ( fd\overline{f} -
\overline{f}df), \ \ \beta = - \frac{idf}{\lambda} . $$
Clearly, $\overline{\alpha } = -\alpha$, and
$ d\alpha = \beta \overline{\beta}$, $d\beta = 2 \beta \alpha$.
This means $\Theta_1=\Theta_2=0$, so the Riemannian connection of $\omega$
 is everywhere flat. This completes the proof of the lemma.
\end{proof}

Note that $\beta\overline{\beta} = \partial \overline{\partial } \log (1+|f|^2)$ is globally defined, so $\log (1+|f|^2)$ is defined up to an additive pluriharmonic function, but $f$ itself is not globally defined.

\vsv

Using Lemma 14, we can easily produce lots of explicit examples of Hermitian metrics with flat Riemannian connections. For instance, if we take $u=z_1$, $v=0$, $f=z_2$,  we get a Hermitian metric $g_1$ on ${\mathbb C}^{\ast }\times {\mathbb C}$:
$$ \omega_{g_1} = \frac{\sqrt{-1}}{(1+|z_2|^2)^2} \{ (1+|z_2|^2)dz_1 \wedge d\overline{z}_1 + |z_1|^2 dz_2 \wedge d\overline{z}_2 - z_1\overline{z}_2 dz_1 \wedge d\overline{z}_2 - z_2\overline{z}_1 dz_2 \wedge d\overline{z}_1 \} $$
If we let $u=z_1$, $v=z_2$ and $f=\sqrt{-1}z_1z_2$, then we get a Hermitian metric $g_2$ on ${\mathbb C}\times \Omega$, where $\Omega \subseteq {\mathbb C}$ is any domain not intersecting the unit circle $|z_2|=1$, by
$$ \omega_{g_2} = \frac{\sqrt{-1}}{(1+|z_1z_2|^2)^2} \{ (1-|z_2|^2)^2dz_1 \wedge d\overline{z}_1 + (1+|z_1|^2)^2 dz_2 \wedge d\overline{z}_2  \} .$$
We will leave it to the readers to verify that the square norm $|T^c|^2$ of the Chern torsion tensor for  $g_1$ or $g_2$ is not a constant. Note that for a given explicit metric such as $g_2$, it is a rather tedious task to compute its Riemannian curvature, without knowing a convenient unitary coframe a priori.

\vs

\noindent\textbf{Acknowledgments.} We would like to thank Bennett
Chow, Bo Guan, Gabriel Khan, Kefeng Liu, Lei Ni,  Valentino Tosatti, Damin Wu, Hongwei Xu, and Xiaokui Yang for their
interests and encouragement.

\vs

\end{document}